\numberwithin{equation}{section}
\numberwithin{figure}{section}
\numberwithin{table}{section}
\theoremstyle{plain}
\newtheorem{thm}{\protect\theoremname}[section]
\theoremstyle{definition}
\newtheorem{defn}[thm]{\protect\definitionname}
\theoremstyle{plain}
\newtheorem{lem}[thm]{\protect\lemmaname}
\theoremstyle{plain}
\newtheorem{cor}[thm]{\protect\corollaryname}
\theoremstyle{remark}
\newtheorem{rem}[thm]{\protect\remarkname}
\theoremstyle{plain}
\newtheorem{prop}[thm]{\protect\propositionname}
\theoremstyle{definition}
\newtheorem{example}[thm]{\protect\examplename}
\theoremstyle{remark}
\newtheorem*{acknowledgement*}{\protect\acknowledgementname}
\setlist[enumerate]{itemsep=5pt,topsep=3pt}
\setlist[enumerate,1]{label=(\roman*),ref=\roman*}
\setlist[enumerate,2]{label=(\alph*),ref=\theenumi \alph*}
\providecommand{\acknowledgementname}{Acknowledgement}
\providecommand{\corollaryname}{Corollary}
\providecommand{\definitionname}{Definition}
\providecommand{\examplename}{Example}
\providecommand{\lemmaname}{Lemma}
\providecommand{\propositionname}{Proposition}
\providecommand{\remarkname}{Remark}
\providecommand{\theoremname}{Theorem}
\begin{document}
\title[]{Realizations and factorizations of positive definite kernels}
\author{Palle Jorgensen and Feng Tian}
\address{(Palle E.T. Jorgensen) Department of Mathematics, The University of
Iowa, Iowa City, IA 52242-1419, U.S.A. }
\email{palle-jorgensen@uiowa.edu}
\urladdr{http://www.math.uiowa.edu/\textasciitilde jorgen/}
\address{(Feng Tian) Department of Mathematics, Hampton University, Hampton,
VA 23668, U.S.A.}
\email{feng.tian@hamptonu.edu}
\subjclass[2000]{Primary 47L60, 46N30, 46N50, 42C15, 65R10, 31C20, 62D05, 94A20, 39A12;
Secondary 46N20, 22E70, 31A15, 58J65}
\keywords{Hilbert space, reproducing kernel Hilbert space, harmonic analysis,
Gaussian free fields, transforms, covariance, generalized Ito integration.}
\begin{abstract}
Given a fixed sigma-finite measure space $\left(X,\mathscr{B},\nu\right)$,
we shall study an associated family of positive definite kernels $K$.
Their factorizations will be studied with view to their role as covariance
kernels of a variety of stochastic processes. In the interesting cases,
the given measure $\nu$ is infinite, but sigma-finite. We introduce
such positive definite kernels $K\left(\cdot,\cdot\right)$ with the
two variables from the subset of the sigma-algebra $\mathscr{B}$,
sets having finite $\nu$ measure. Our setting and results are motivated
by applications. The latter are covered in the second half of the
paper. We first make precise the notions of realizations and factorizations
for $K$; and we give necessary and sufficient conditions for $K$
to have realizations and factorizations in $L^{2}\left(\nu\right)$.
Tools in the proofs rely on probability theory and on spectral theory
for unbounded operators in Hilbert space. Applications discussed here
include the study of reversible Markov processes, and realizations
of Gaussian fields, and their Ito-integrals.
\end{abstract}

\maketitle
\tableofcontents{}

\section{Introduction}

We study a family of problems from measurable dynamics and their connection
to the theory of positive definite functions (kernels). In particular,
the aim of the present paper is two-fold: (i) An extension of the
traditional reproducing kernel Hilbert space (RKHS) theory, from the
more traditional context of Aronszajn \cite{MR0008639,MR0051437}
to a measurable category. (ii) A characterization of when a positive
definite kernel assumes realization/factorization in the general setting.

Such an extension will adapt much better to a host of applications,
including problems from probability theory, from stochastic processes
\cite{AJ12,MR3507188}, from mathematical physics \cite{HKL14,RAKK05},
and measurable dynamics; and for the latter, especially to the context
of reversible processes (see, e.g., \cite{MR3039515,MR3290453,MR3046303,MR2982692,MR2811284,MR2815030},
and also \cite{MR3616046,MR3531697}). 

For applications to random processes, a kernel in the sense of Aronszajn
will typically represent a covariance kernel. Our applications include
a new spectral theoretic analysis of (a) transient Markov processes,
and of (b) generalized Gaussian fields and their Ito-integrals.

Now in the standard approach to RKHSs of Aronszajn, one starts with
a positive definite (p.d.) function, $K$ on $X\times X$ (often called
a p.d. kernel), where $X$ is a given set. The term \textquotedblleft reproducing\textquotedblright{}
refers to the fact that for every $f$ in $\mathscr{H}$, the values
$f\left(x\right)$ can be reproduced from the inner Hilbert-product
in $\mathscr{H}$. With a standard construction, starting with $X$
and $K$, one then arrives at a Hilbert space $\mathscr{H}$ of functions
on $X$, the so called reproducing kernel Hilbert space (RKHS). It
depends on the pair $\left(X,K\right)$ of course; so is denoted $\mathscr{H}\left(K\right)$
when the kernel is not given from the context. \emph{A priori}, the
set $X$ is not given any additional structure, but a key point is
that both $K$ and the functions $f$ in the RKHS $\mathscr{H}\left(K\right)$
are defined everywhere on $X$. If for example, $X$ is a complex
domain, in interesting applications, then the functions in $\mathscr{H}\left(K\right)$
will be analytic, or in the case of the familiar RKHS of Bargmann,
the functions in $\mathscr{H}\left(K\right)$ will be entire analytic.
If $X$ has a topology, and if $K$ is assumed continuous, then the
functions in $\mathscr{H}\left(K\right)$ will then also be continuous.

A novelty in the present paper is the starting point being a fixed
measure space $\left(X,\mathscr{B},\nu\right)$ where $\nu$ is assumed
sigma-finite. Set $\mathscr{B}_{fin}:=\left\{ A\in\mathscr{B}\mathrel{;}\nu\left(A\right)<\infty\right\} $.
We shall then consider p.d. functions $K$ on $\mathscr{B}_{fin}\times\mathscr{B}_{fin}$.
Our main result is a characterization of the case when the corresponding
RKHS $\mathscr{H}\left(K\right)$ may be realized in $L^{2}\left(\nu\right)$. 

In addition to a number of applications, we also give necessary and
sufficient conditions on $K$ which yield factorizations
\[
K\left(A,B\right)=\int_{X}k_{A}\left(x\right)\overline{k_{B}\left(x\right)}d\nu\left(x\right),\quad\forall A,B\in\mathscr{B}_{fin},
\]
where $\left\{ k_{A}\right\} _{A\in\mathscr{B}_{fin}}$ is an $L^{2}\left(\nu\right)$
system. 

But up to now, many of the applications have focused on Hilbert spaces
of regular functions. If for example, a kernel represents a Green's
function for an elliptic partial differential operator (PDO), then
the associated RKHS will consist of functions which have some degree
of smoothness.

The Aronszajn approach has serious limitations: Often functions will
be defined only almost everywhere with respect to some measure which
is prescribed on the set $X$; for example, if $X$ represents time,
in one or more dimensions, the prescribed measure $\nu$ is often
Lebesgue measure. For fractal random fields, $\nu$ may be a fractal
measure. For this reason, and others (to be outlined inside the paper),
it is useful to instead let $X$ be a measure space, say $\left(X,\mathscr{B},\nu\right)$.
If $X$ is a locally compact Hausdorff space, then $\mathscr{B}$
is the corresponding sigma-algebra of Borel sets, and $\nu$ is a
fixed positive measure, and assumed to be a regular measure on $\left(X,\mathscr{B}\right)$.
The modification in the resulting new definition of p.d. kernels $K$
in this context is subtle. Here we just mention that, for a p.d. system
$\left(X,\mathscr{B},\nu\right)$ and kernel $K$ in the measurable
category, the associated RKHS $\mathscr{H}\left(K\right)$ will now
instead be a Hilbert space of measurable functions on $X$, more precisely,
measurable with respect to $\mathscr{B}$, and locally in $L^{2}\left(\nu\right)$.
We shall say that $\mathscr{H}\left(K\right)$ is contained in $L_{loc}^{2}\left(\nu\right)$.
The p.d. kernel $K$ itself will be a random family of signed measures
on $\left(X,\mathscr{B}\right)$.

\textbf{Discussion of the literature.} The theory of RKHS and their
applications is vast, and below we only make a selection. Readers
will be able to find more cited there. As for the general theory of
RKHS in the pointwise category, we find useful \cite{AD92,ABDdS93,AD93,MR2529882,MR3526117}.
The applications include fractals (see e.g., \cite{MR0008639,AJSV13});
probability theory \cite{MR0277027,MR3504608,MR3624688,MR3571702};
and application to learning theory \cite{MR2058288,MR2558684,MR3450534}.
For recent applications, we refer to \cite{MR3450534,MR3441738,MR3559001}. 

\section{\label{sec:fact}Factorization}

We study representations of positive definite kernels $K$ in a general
setting, but with view to applications to realizations of certain
stochastic processes, to harmonic analysis, and to metric geometry.
Our initial results are stated for the most general given positive
definite kernel, but are then subsequently specialized to the above
mentioned applications. Given a positive definite kernel $K$ on $X\times X$
where $X$ is a fixed set, we first study families of \emph{factorizations}
(or representations) of $K$. 
\begin{defn}
If $K:X\times X\longrightarrow\mathbb{C}$ is a given positive definite
kernel, and $\left(\Omega,\mathscr{M},\mu\right)$ is a measure space,
a function $k:X\longrightarrow L^{2}\left(\Omega,\mathscr{M},\mu\right)$
is said to be a factorization iff (Defn.) 
\[
k\left(x,y\right)=\left\langle k_{x},k_{y}\right\rangle _{L^{2}\left(\Omega,\mathscr{M},\mu\right)},\quad\forall x,y\in X.
\]
\end{defn}

Let $\left(X,\mathscr{B},\nu\right)$ denote a measure space. Our
starting assumptions are as follows:
\begin{enumerate}
\item \label{enu:a1}$\mathscr{B}$ is a fixed sigma-algebra of subsets
of $X$, and 
\item \label{enu:a2}$\nu$ is a positive $\sigma$-finite measure defined
on $\mathscr{B}$. 
\end{enumerate}
Given (\ref{enu:a1}) \& (\ref{enu:a2}), we set $\mathscr{B}_{fin}=\left\{ A\in\mathscr{B}\mathrel{;}\nu\left(A\right)<\infty\right\} $. 

We shall consider positive definite kernels $K$ on $\mathscr{B}_{fin}\times B_{fin}$.
While our results are valid in the case of function spaces consisting
of complex-valued functions, we shall restrict attention here to the
real-valued case. This is motivated by applications, and the extension
to the complex case is relatively simple. 
\begin{defn}
\label{def:pdk}A function $K$ on $\mathscr{B}_{fin}\times B_{fin}$
(mapping into $\mathbb{R}$) is said to be \emph{positive definite}
iff (Defn.), for $\forall n\in\mathbb{N}$, $\forall\left\{ \alpha_{i}\right\} _{1}^{n}$,
$\alpha_{i}\in\mathbb{R}$, and all $\left\{ A_{i}\right\} _{1}^{n}$,
$A_{i}\in\mathscr{B}_{fin}$, we have 
\begin{equation}
\sum\nolimits _{i}\sum\nolimits _{j}\alpha_{i}\alpha_{j}K\left(A_{i},A_{j}\right)\geq0.\label{eq:m1}
\end{equation}

When a positive definite kernel $K$ is given, we shall denote the
corresponding reproducing kernel Hilbert space (RKHS) by $\mathscr{H}\left(K\right)$. 
\end{defn}

Recall the RKHS $\mathscr{H}\left(K\right)$ is a Hilbert space of
functions $F$ defined on $\mathscr{B}_{fin}$ such that, for all
$A\in\mathscr{B}_{fin}$, the mapping 
\begin{equation}
\mathscr{H}\left(K\right)\ni F\longrightarrow F\left(A\right)\label{eq:m2}
\end{equation}
is norm-continuous. When this holds, there is a unique representation
$\varphi_{A}\in\mathscr{H}\left(K\right)$ such that 
\begin{equation}
F\left(A\right)=\left\langle F,\varphi_{A}\right\rangle _{\mathscr{H}\left(K\right)}.\label{eq:m3}
\end{equation}
It is well-known that one may take $\varphi_{A}\left(\cdot\right)=K\left(\cdot,A\right)$,
and that then 
\begin{equation}
\left\langle K\left(\cdot,A\right),K\left(\cdot,B\right)\right\rangle _{\mathscr{H}\left(K\right)}=K\left(A,B\right),\quad\forall A,B\in\mathscr{B}_{fin}.\label{eq:m4}
\end{equation}

\begin{defn}
\label{def:pdf}We shall say that a given positive definite kernel
$K$ on $\mathscr{B}_{fin}\times\mathscr{B}_{fin}$ has a \emph{realization}
(as factorization) in $L^{2}\left(\nu\right)$ iff (Defn.) $\exists$$\left\{ k_{A}\right\} _{A\in\mathscr{B}_{fin}}$,
such that $k_{A}\in L^{2}\left(\nu\right)$, and 
\begin{equation}
K\left(A,B\right)=\left\langle k_{A},k_{B}\right\rangle _{L^{2}\left(\nu\right)}=\int_{X}k_{A}\left(x\right)k_{B}\left(x\right)d\nu\left(x\right).\label{eq:m5}
\end{equation}
\end{defn}

\begin{thm}
\label{thm:f1}Let $X$, $\mathscr{B}$, $\nu$, and $K$ be as above,
i.e., we assume that $K$ is a positive definite kernel defined on
$\mathscr{B}_{fin}\times\mathscr{B}_{fin}$. 

Then the following two conditions are equivalent: 
\begin{enumerate}
\item \label{enu:at1}For all $B\in\mathscr{B}_{fin}$, $K\left(\cdot,B\right)$
is a sigma-finite signed measure on $\mathscr{B}$, and 
\begin{equation}
K\left(\cdot,B\right)\ll\nu\label{eq:m6}
\end{equation}
where $\ll$ refers to ``absolute continuity,'' i.e., that the following
implication holds: 
\[
\nu\left(A\right)=0\Longrightarrow K\left(A,B\right)=0.
\]
\item \label{enu:at2}The positive definite kernel $K$ in (\ref{enu:at1})
has an $L^{2}\left(\nu\right)$-realization, i.e., (\ref{eq:m5})
is satisfied for a system $\left\{ k_{A}\right\} _{A\in\mathscr{B}_{fin}}$. 
\end{enumerate}
\end{thm}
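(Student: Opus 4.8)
The plan is to prove the two implications separately, with the substantive direction being \ref{enu:at1} $\Rightarrow$ \ref{enu:at2}. For the easy direction \ref{enu:at2} $\Rightarrow$ \ref{enu:at1}, suppose a system $\{k_A\}_{A\in\mathscr{B}_{fin}}$ in $L^2(\nu)$ satisfies \eqref{eq:m5}. Then for fixed $B$ one simply checks that $A\mapsto K(A,B)=\int_X k_A(x)k_B(x)\,d\nu(x)$ is countably additive (using $k_{\sqcup_n A_n}=\sum_n k_{A_n}$ in $L^2$, which itself follows from comparing norms via the kernel, plus dominated convergence against the fixed $L^2$ function $k_B$), hence a signed measure; $\sigma$-finiteness comes from the ambient $\sigma$-finiteness of $\nu$ together with $|K(A,B)|\le\|k_A\|_2\|k_B\|_2$, and if $\nu(A)=0$ then $k_A=0$ $\nu$-a.e., so $K(A,B)=0$. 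This gives $K(\cdot,B)\ll\nu$.

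For the main direction \ref{enu:at1} $\Rightarrow$ \ref{enu:at2}, I would argue as follows. For each fixed $B\in\mathscr{B}_{fin}$, condition \ref{enu:at1} says $K(\cdot,B)$ is a $\sigma$-finite signed measure absolutely continuous with respect to $\nu$, so the Radon--Nikodym theorem produces a measurable function $g_B:=dK(\cdot,B)/d\nu\in L^1_{loc}(\nu)$ with $K(A,B)=\int_A g_B\,d\nu$ for all $A\in\mathscr{B}_{fin}$. The goal is to upgrade these Radon--Nikodym derivatives into an $L^2(\nu)$ factorization. The natural mechanism is the reproducing kernel Hilbert space: define a linear map $T:\operatorname{span}\{K(\cdot,B):B\in\mathscr{B}_{fin}\}\subset\mathscr{H}(K)\to L^2(\nu)$ by $T\big(K(\cdot,B)\big)=g_B$, or more precisely first show $T$ is an isometry onto its range and then set $k_B:=T K(\cdot,B)$. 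To see isometry, compute, for finite sums, $\langle \sum_i\alpha_i g_{B_i},\sum_j\alpha_j g_{B_j}\rangle_{L^2(\nu)}=\sum_{i,j}\alpha_i\alpha_j\int_X g_{B_i}g_{B_j}\,d\nu$; one wants this to equal $\sum_{i,j}\alpha_i\alpha_j K(B_i,B_j)=\|\sum_i\alpha_i K(\cdot,B_i)\|_{\mathscr{H}(K)}^2$ by \eqref{eq:m4}. The identity $\int_X g_{B}\,g_{C}\,d\nu=K(B,C)$ is exactly what needs to be verified, and it is the crux: one knows $\int_A g_C\,d\nu=K(A,C)$ for $A\in\mathscr{B}_{fin}$, and wants to replace $\mathbbm{1}_A$ by the function $g_B$. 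This should follow by a standard measure-theoretic approximation: write $g_B$ as an $L^1(K(\cdot,C))$-limit (or $\nu$-a.e. monotone limit on an exhausting sequence of finite-measure sets where $g_C$ is bounded) of simple functions $\sum_k c_k\mathbbm{1}_{A_k}$, for each of which $\int(\sum_k c_k\mathbbm{1}_{A_k})g_C\,d\nu=\sum_k c_k K(A_k,C)$; one must check the limit is legitimate, which is where $\sigma$-finiteness and the local integrability are used, and it may require first restricting to a set where both $g_B$ and $g_C$ are bounded and $\nu$ is finite, then exhausting. Granting this, $T$ is a well-defined isometry on a dense subspace of $\mathscr{H}(K)$, hence extends to an isometry $\mathscr{H}(K)\hookrightarrow L^2(\nu)$, and $k_B:=g_B=TK(\cdot,B)$ satisfies $\langle k_A,k_B\rangle_{L^2(\nu)}=\langle K(\cdot,A),K(\cdot,B)\rangle_{\mathscr{H}(K)}=K(A,B)$, which is \eqref{eq:m5}.

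I expect the main obstacle to be precisely the "change of test function" step: promoting $\int_A g_C\,d\nu=K(A,C)$ (valid for indicators $\mathbbm{1}_A$, $A\in\mathscr{B}_{fin}$) to $\int_X g_B\,g_C\,d\nu=K(B,C)$, and more generally to $\int_X f\,g_C\,d\nu=\int_X f\,dK(\cdot,C)$ for $f$ in a suitable class. Because $\nu$ is merely $\sigma$-finite and the signed measures $K(\cdot,B)$ are only $\sigma$-finite (not finite), one cannot blindly integrate; the argument must be localized to an increasing sequence $X_n\uparrow X$ with $\nu(X_n)<\infty$ on which the relevant densities are bounded, establish the identity there by the standard simple-function/monotone-class argument, and then pass to the limit, checking integrability of $g_B g_C$ against $\nu$ — which is guaranteed, a posteriori, by the isometry bound $\|g_B\|_{L^2(\nu)}^2=K(B,B)<\infty$ once it is in place, so some care is needed to order the steps so the argument is not circular (e.g. first prove $g_B\in L^2(\nu)$ with $\|g_B\|_2^2=K(B,B)$ by a truncation-and-monotone-convergence argument using only \ref{enu:at1}, then deduce the polarized identity). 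A secondary, routine point is checking that $K(\cdot,B)$ being a signed measure lets one apply Radon--Nikodym in its Jordan-decomposed, $\sigma$-finite form; this is standard and I would dispatch it quickly.
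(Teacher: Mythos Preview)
The substantive direction \ref{enu:at1}$\Rightarrow$\ref{enu:at2} has a genuine error: the identity you are aiming for, $\int_X g_B\,g_C\,d\nu=K(B,C)$, is simply false in general, so $K(\cdot,B)\mapsto g_B$ is not an isometry from $\mathscr{H}(K)$ into $L^2(\nu)$. A two-point example exposes this. Let $X=\{1,2\}$ with counting measure $\nu$, and let $K$ be determined by a positive definite $2\times 2$ matrix $M$ via $K(\{i\},\{j\})=M_{ij}$ (extended additively). Then $g_{\{j\}}(i)=K(\{i\},\{j\})=M_{ij}$, so $\int g_{\{i\}}g_{\{j\}}\,d\nu=(M^2)_{ij}$, which equals $M_{ij}$ only if $M^2=M$; already $M=2I$ fails. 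The slip in your approximation argument is a conflation: if you approximate $g_B$ by simple functions $\sum_k c_k\chi_{A_k}$ and pass to the limit in $\int\bigl(\sum_k c_k\chi_{A_k}\bigr)g_C\,d\nu=\sum_k c_k K(A_k,C)$, both sides converge to the \emph{same} number $\int g_B\,g_C\,d\nu$, not to $K(B,C)=\int\chi_B\,g_C\,d\nu$. Nothing forces $\int(g_B-\chi_B)\,g_C\,d\nu=0$.

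The paper does not take the Radon--Nikodym derivatives as the factorizing family. It views $(\varphi,\psi)\mapsto\sum_{i,j}\alpha_i\beta_j K(A_i,B_j)$ as a closable positive quadratic form on the dense domain $\mathscr{D}_{fin}\subset L^2(\nu)$, invokes the Kato--Friedrichs theorem to get a positive selfadjoint $T$ representing the form, and sets $k_A:=T^{1/2}\chi_A$. In the two-point example this gives $k_{\{j\}}=M^{1/2}e_j$, and indeed $\langle M^{1/2}e_i,M^{1/2}e_j\rangle=M_{ij}$. In this language your $g_B$ is $T\chi_B$, one power of $T^{1/2}$ too many; the square root is exactly what your approach is missing. (For \ref{enu:at2}$\Rightarrow$\ref{enu:at1}, your claim ``$\nu(A)=0$ implies $k_A=0$'' also lacks justification: Definition~\ref{def:pdf} ties $k_A$ to $A$ only through \eqref{eq:m5}. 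The paper instead builds an operator $S$ on $\mathscr{D}_{fin}$ from the $k_A$'s and reads off the density as $S^*S(\chi_B)$.)
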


Before we turn to the proof, we shall need the following lemma. It
clarifies the notation of condition (\ref{eq:m6}) in (\ref{enu:at1}).
\begin{lem}
Let $\left(X,\mathscr{B},\nu\right)$, $K$ and $\mathscr{B}_{fin}$
be as stated in the theorem. Let $A\in\mathscr{B}_{fin}$ be given.
If $\nu\left(A\right)=0$ $\Longrightarrow$ $K\left(A,A\right)=0$,
then it follows automatically that also $K\left(A,B\right)=0$ for
all $B\in\mathscr{B}_{fin}$.
\end{lem}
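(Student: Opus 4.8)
The plan is to deduce the lemma from the reproducing identity (\ref{eq:m4}) together with the two-term case of the positive-definiteness inequality (\ref{eq:m1}); the measure-theoretic hypothesis $\nu(A)=0$ enters only insofar as it forces $K(A,A)=0$ via the assumed implication, and then plays no further role. So fix $A\in\mathscr{B}_{fin}$ and suppose $K(A,A)=0$.

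First I would pass to the RKHS $\mathscr{H}(K)$. By (\ref{eq:m4}),
\[
\bigl\| K(\cdot, A) \bigr\|_{\mathscr{H}(K)}^{2} = \bigl\langle K(\cdot, A), K(\cdot, A) \bigr\rangle_{\mathscr{H}(K)} = K(A, A) = 0 ,
\]
so $K(\cdot, A)$ is the zero vector in $\mathscr{H}(K)$. Applying (\ref{eq:m4}) a second time, for every $B\in\mathscr{B}_{fin}$ we get
\[
K(A, B) = \bigl\langle K(\cdot, A), K(\cdot, B) \bigr\rangle_{\mathscr{H}(K)} = 0 ,
\]
which is the claim.

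If one prefers to avoid $\mathscr{H}(K)$, the same conclusion follows directly from (\ref{eq:m1}): take $n=2$, $A_{1}=A$, $A_{2}=B$, and real scalars $\alpha_{1}=s$, $\alpha_{2}=1$, which (using the symmetry $K(A,B)=K(B,A)$) yields $s^{2}K(A,A)+2s\,K(A,B)+K(B,B)\ge 0$ for all $s\in\mathbb{R}$. With $K(A,A)=0$ this reduces to $2s\,K(A,B)+K(B,B)\ge 0$ for all $s\in\mathbb{R}$; since $K(B,B)$ is a finite real number, sending $s\to-\infty$ when $K(A,B)>0$ (resp.\ $s\to+\infty$ when $K(A,B)<0$) would contradict the inequality, so $K(A,B)=0$.

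I do not expect any genuine obstacle here. The only points worth checking are standing hypotheses already in place: $K$ is real-valued, so each $K(B,B)$ is finite; the reproducing vectors $K(\cdot, A)$ all belong to $\mathscr{H}(K)$ with inner products computed by (\ref{eq:m4}); and $K$ is symmetric, which in any case is automatic from (\ref{eq:m4}) because the inner product of $\mathscr{H}(K)$ is symmetric.
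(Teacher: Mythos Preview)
Your proof is correct and follows essentially the same route as the paper: both work in $\mathscr{H}(K)$ via the reproducing identity (\ref{eq:m4}), the paper invoking the Schwarz inequality $|K(A,B)|^{2}\le K(A,A)K(B,B)$ while you use the equivalent observation that $\|K(\cdot,A)\|_{\mathscr{H}(K)}=0$ forces $K(\cdot,A)=0$. Your alternative direct argument from (\ref{eq:m1}) is a fine elementary variant but not needed.
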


\begin{proof}
The assertion follows from the assumed positive definite property
for $K$. If $\mathscr{H}\left(K\right)$ denotes the RKHS, then the
Schwarz inequality holds for the $\mathscr{H}\left(K\right)$-inner
product. Hence 
\begin{eqnarray*}
\left|K\left(A,B\right)\right|^{2} & = & \left|\left\langle K\left(\cdot,A\right),K\left(\cdot,B\right)\right\rangle _{\mathscr{H}\left(K\right)}\right|^{2}\\
 & \underset{\text{Schwarz}}{\leq} & \left\Vert K\left(\cdot,A\right)\right\Vert _{\mathscr{H}\left(K\right)}^{2}\left\Vert K\left(\cdot,B\right)\right\Vert _{\mathscr{H}\left(K\right)}^{2}\\
 & \underset{\text{by }\left(\ref{eq:m3}\right)\&\left(\ref{eq:m4}\right)}{=} & K\left(A,A\right)K\left(B,B\right);
\end{eqnarray*}
and the conclusion of the lemma follows.
\end{proof}
\begin{proof}[Proof of Theorem \ref{thm:f1}]
(\ref{enu:at1})$\Rightarrow$(\ref{enu:at2}). With (\ref{enu:at1})
assumed, we shall denote the indexed family of Radon-Nikodym derivatives
$g\left(\cdot,B\right)$; so an indexed family of locally integrable
functions on $\left(X,\mathscr{B}\right)$ such that 
\begin{equation}
\frac{dK\left(\cdot,B\right)}{d\nu}=g\left(\cdot,B\right);\label{eq:m8}
\end{equation}
or equivalently, 
\begin{equation}
\int_{A}g\left(x,B\right)d\nu\left(x\right)=K\left(A,B\right),\quad\forall A,B\in\mathscr{B}_{fin}.\label{eq:m9}
\end{equation}

Let $\mathscr{D}_{fin}$ denote the finite linear combinations 
\begin{equation}
\sum\nolimits _{i=1}^{n}\alpha_{i}\chi_{A_{i}},\quad\alpha_{i}\in\mathbb{R},\;A_{i}\in\mathscr{B}_{fin}.\label{eq:m10}
\end{equation}
Recall that $\mathscr{D}_{fin}$ is dense in $L^{2}\left(\nu\right)$
relative to the $L^{2}\left(\nu\right)$-norm. For $\varphi=\sum_{i}\alpha_{i}\chi_{A_{i}}$,
$\psi=\sum_{j}\beta_{j}\chi_{B_{j}}$ $\in\mathscr{D}_{fin}$, $\alpha_{i},\beta_{j}\in\mathbb{R}$,
$A_{i},B_{j}\in\mathscr{B}_{fin}$; then 
\begin{equation}
\varphi,\psi\longmapsto\sum\nolimits _{i}\sum\nolimits _{j}\alpha_{i}\beta_{j}\int_{A_{i}}g\left(x,B_{j}\right)d\nu\left(x\right)\label{eq:m11}
\end{equation}
defines a positive definite bilinear mapping in $L^{2}\left(\nu\right)$,
with $\mathscr{D}_{fin}$ as its dense domain. One easily checks that
it is closable. Hence an application of the Kato-Friedrichs theorem
(see, e.g., \cite{MR1009163,MR1335452,MR3642406}) yields a positive
selfadjoint operator $T$ in $L^{2}\left(\nu\right)$, $\mathscr{D}_{fin}\subset dom\left(T\right)$
such that the expression in (\ref{eq:m11}) takes the following form
\begin{align}
\text{RHS}_{\left(\ref{eq:m11}\right)} & =\int_{X}\varphi\left(x\right)\left(T\psi\right)\left(x\right)d\nu\left(x\right)\label{eq:m12}\\
 & =\left\langle \varphi,T\psi\right\rangle _{L^{2}\left(\nu\right)}=\langle T^{\frac{1}{2}}\varphi,T^{\frac{1}{2}}\psi\rangle_{L^{2}\left(\nu\right)}\nonumber \\
 & =\int_{X}(T^{\frac{1}{2}}\varphi)\left(x\right)(T^{\frac{1}{2}}\psi)\left(x\right)d\nu\left(x\right).\nonumber 
\end{align}
Setting $\varphi\in\chi_{A}$, $\psi=\chi_{B}$, $A,B\in\mathscr{B}_{fin}$,
and combining (\ref{eq:m11}) \& (\ref{eq:m12}), we get 
\[
K\left(A,B\right)=\int_{X}(T^{\frac{1}{2}}\varphi)\left(x\right)(T^{\frac{1}{2}}\psi)\left(x\right)d\nu\left(x\right).
\]
This yields a solution to (\ref{enu:at2}) with $k_{A}\left(x\right)=(T^{\frac{1}{2}}(\chi_{A}))\left(x\right)$,
$A\in\mathscr{B}_{fin}$, $x\in X$. 

We now turn to (\ref{enu:at2})$\Rightarrow$(\ref{enu:at1}). Assuming
$\left\{ k_{A}\right\} _{A\in\mathscr{B}_{fin}}$ is a solution to
(\ref{enu:at2}), see also (\ref{eq:m5}), we may define an operator
$S$ with dense domain $\mathscr{D}_{fin}$ in $L^{2}\left(\nu\right)$,
by setting 
\begin{equation}
S\left(\sum\nolimits _{i}\alpha_{i}\chi_{A_{i}}\right)=\sum\nolimits _{i}\alpha_{i}k_{A_{i}}.\label{eq:m13}
\end{equation}

From the condition in (\ref{enu:at2}) we conclude that $S$ is closable,
and that $S^{*}S$ is defined on $\mathscr{D}_{fin}$. Hence for $\varphi,\psi\in\mathscr{D}_{fin}$;
see (\ref{eq:m10})-(\ref{eq:m11}), we get 
\[
\sum\nolimits _{i}\sum\nolimits _{j}\alpha_{i}\beta_{j}K\left(A_{i},B_{j}\right)=\int_{X}\varphi\left(x\right)\left(S^{*}S\psi\right)\left(x\right)d\nu\left(x\right).
\]
The conclusion in (\ref{enu:at1}) is now immediate: 

For the associated Radon-Nikodym derivative (see (\ref{eq:m8})) we
get 
\[
\frac{dK\left(x,B\right)}{d\nu\left(x\right)}=g\left(x,B\right)=\left(S^{*}S\left(\chi_{B}\right)\right)\left(x\right).
\]
\end{proof}
\begin{cor}
\label{cor:f1}Let $\left(X,\mathscr{B},\nu\right)$ be a fixed sigma-finite
measure-space, as specified in Theorem \ref{thm:f1} above. We set
$\mathscr{B}_{fin}:=\left\{ A\in\mathscr{B}\mathrel{;}\nu\left(A\right)<\infty\right\} $,
and we consider a positive kernel $K$ defined on $\mathscr{B}_{fin}\times\mathscr{B}_{fin}$.
Suppose $K$ has a factorization $\left\{ k_{A}\right\} _{A\in\mathscr{B}_{fin}}$
in $L^{2}\left(\nu\right)$, i.e., 
\begin{equation}
K\left(A,B\right)=\left\langle k_{A},k_{B}\right\rangle _{L^{2}\left(\nu\right)}\label{eq:n1}
\end{equation}
where the RHS in (\ref{eq:n1}) refers to the $L^{2}\left(\nu\right)$-inner
product. 

Define a function $b$ by
\[
b\left(K\left(\cdot,A\right)\right)=k_{A}
\]
from $\mathscr{H}\left(K\right)(=$ the RKHS$)$ into $L^{2}\left(\nu\right)$,
and then extend by linearity: 
\begin{equation}
b\left(\sum\nolimits _{i}\alpha_{i}K\left(\cdot,A_{i}\right)\right)=\sum_{i}\alpha_{i}k_{A_{i}};\label{eq:n2}
\end{equation}
then $b$ is an \uline{isometry} $\mathscr{H}\left(K\right)\longrightarrow L^{2}\left(\nu\right)$. 
\begin{enumerate}
\item \label{enu:n1}When $b$ is extended by closure, we get $b\left(\mathscr{H}\left(K\right)\right)\subseteq L^{2}\left(\nu\right)$
as a closed subspace.
\item \label{enu:n2}The co-isometry $L^{2}\left(\nu\right)\xrightarrow{\;b^{*}\;}\mathscr{H}\left(K\right)$
is given as follows: For all $\varphi\in L^{2}\left(\nu\right)$,
and $A\in\mathscr{B}_{fin}$, set 
\begin{equation}
\left(b^{*}\varphi\right)\left(A\right)=\left\langle \varphi,k_{A}\right\rangle _{L^{2}\left(\nu\right)}=\int_{X}\varphi\left(x\right)k_{A}\left(x\right)d\nu\left(x\right),\label{eq:o1}
\end{equation}
then $b^{*}$ is the adjoint operator to $b$ from (\ref{enu:n1}). 
\item \label{enu:n3}For every orthonormal basis (ONB) in $L^{2}\left(\nu\right)$,
$\left\{ \varphi_{n}\right\} _{n\in\mathbb{N}}$, we have the corresponding
factorization of $K\left(\cdot,\cdot\right)$ on $\mathscr{B}_{fin}\times\mathscr{B}_{fin}$:
\begin{equation}
K\left(A,B\right)=\sum\nolimits _{n\in\mathbb{N}}\left\langle \varphi_{n},k_{A}\right\rangle _{L^{2}\left(\nu\right)}\left\langle k_{B},\varphi_{n}\right\rangle _{L^{2}\left(\nu\right)}.\label{eq:o2}
\end{equation}
 
\end{enumerate}
\end{cor}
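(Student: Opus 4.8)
The plan is to verify three things in sequence: that $b$ as defined in \eqref{eq:n2} is well-defined and isometric on the dense subspace spanned by the kernel sections $K(\cdot,A)$, so it extends to an isometry on all of $\mathscr{H}(K)$; then identify the adjoint via the reproducing property; and finally derive \eqref{eq:o2} by expanding the inner product in $L^2(\nu)$ over an ONB. First I would check well-definedness of $b$: if $\sum_i \alpha_i K(\cdot,A_i)=0$ in $\mathscr{H}(K)$, then by \eqref{eq:m4} we have $\bigl\|\sum_i \alpha_i K(\cdot,A_i)\bigr\|_{\mathscr{H}(K)}^2=\sum_i\sum_j\alpha_i\alpha_j K(A_i,A_j)=0$, and by the factorization \eqref{eq:n1} this equals $\bigl\|\sum_i\alpha_i k_{A_i}\bigr\|_{L^2(\nu)}^2$; hence $\sum_i\alpha_i k_{A_i}=0$ in $L^2(\nu)$, so $b$ is well-defined on the span, and the same identity shows $b$ preserves norms. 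Since the kernel sections span a dense subspace of $\mathscr{H}(K)$, $b$ extends uniquely to an isometry $\mathscr{H}(K)\to L^2(\nu)$, whose range is therefore a closed subspace; this gives (\ref{enu:n1}).

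For (\ref{enu:n2}), I would compute $\langle b^*\varphi, K(\cdot,A)\rangle_{\mathscr{H}(K)}$ for arbitrary $\varphi\in L^2(\nu)$ and $A\in\mathscr{B}_{fin}$. By definition of the adjoint this equals $\langle \varphi, b\,K(\cdot,A)\rangle_{L^2(\nu)}=\langle\varphi,k_A\rangle_{L^2(\nu)}=\int_X \varphi(x)k_A(x)\,d\nu(x)$. On the other hand, by the reproducing property \eqref{eq:m3}, $\langle b^*\varphi, K(\cdot,A)\rangle_{\mathscr{H}(K)}=(b^*\varphi)(A)$. Equating the two gives \eqref{eq:o1}. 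The operator $b^*$ is a co-isometry precisely because $b$ is an isometry, i.e. $b b^*$ is the orthogonal projection of $L^2(\nu)$ onto the closed range $b(\mathscr{H}(K))$, while $b^*b=\mathrm{id}_{\mathscr{H}(K)}$.

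For (\ref{enu:n3}), given any ONB $\{\varphi_n\}_{n\in\mathbb{N}}$ of $L^2(\nu)$, I would simply apply Parseval's identity to the two vectors $k_A,k_B\in L^2(\nu)$: $K(A,B)=\langle k_A,k_B\rangle_{L^2(\nu)}=\sum_{n\in\mathbb{N}}\langle k_A,\varphi_n\rangle_{L^2(\nu)}\langle\varphi_n,k_B\rangle_{L^2(\nu)}$, which (using that the scalars are real, so complex conjugation is trivial, and reordering the two factors) is exactly \eqref{eq:o2}. One should note that the sum converges absolutely by Cauchy--Schwarz on $\ell^2$. I do not anticipate a serious obstacle here; the only point requiring care is the well-definedness argument in the first paragraph — one must confirm that the map respects the identifications in $\mathscr{H}(K)$, which is immediate from the norm identity above — and the implicit observation that $b$ being an isometry already forces the factorization hypothesis to be consistent, so no extra regularity (such as Theorem \ref{thm:f1}(\ref{enu:at1})) is needed for this corollary.
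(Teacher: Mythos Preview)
Your proposal is correct and follows essentially the same route as the paper: the norm identity $\|\sum_i\alpha_iK(\cdot,A_i)\|_{\mathscr{H}(K)}^2=\sum_{i,j}\alpha_i\alpha_jK(A_i,A_j)=\|\sum_i\alpha_ik_{A_i}\|_{L^2(\nu)}^2$ for the isometry, the reproducing property for identifying $b^*$, and Parseval for part~(\ref{enu:n3}). The only cosmetic difference is in (\ref{enu:n2}): the paper first verifies directly via a Schwarz estimate that the function $A\mapsto\langle\varphi,k_A\rangle_{L^2(\nu)}$ lies in $\mathscr{H}(K)$ before checking it is the adjoint, whereas you invoke the abstract existence of $b^*$ (automatic since $b$ is bounded) and then read off its values from the reproducing property---your route is slightly cleaner but equivalent.
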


\begin{proof}
The key step is the isometry-property via formula (\ref{eq:n2}),
i.e., the fact that 
\begin{equation}
\left\Vert \sum\nolimits _{i}\alpha_{i}K\left(\cdot,A_{i}\right)\right\Vert _{\mathscr{H}\left(K\right)}^{2}=\left\Vert \sum\nolimits _{i}\alpha_{i}k_{A_{i}}\right\Vert _{L^{2}\left(\nu\right)}^{2}.\label{eq:n3}
\end{equation}
To prove this, fix $\left\{ \alpha_{i}\right\} _{1}^{n}$, $\left\{ A_{i}\right\} _{1}^{n}$
as specified; then 
\begin{eqnarray*}
\text{LHS}_{\left(\ref{eq:n3}\right)} & = & \sum\nolimits _{i}\sum\nolimits _{j}\alpha_{i}\alpha_{j}\left\langle K\left(\cdot,A_{i}\right),K\left(\cdot,A_{j}\right)\right\rangle _{\mathscr{H}\left(K\right)}\\
 & = & \sum\nolimits _{i}\sum\nolimits _{j}\alpha_{i}\alpha_{j}K\left(A_{i},A_{j}\right)\quad(\text{the reproducing property})\\
 & \underset{\text{by \ensuremath{\left(\ref{eq:n1}\right)}}}{=} & \sum\nolimits _{i}\sum\nolimits _{j}\alpha_{i}\alpha_{j}\int_{X}k_{A_{i}}\left(x\right)k_{A_{j}}\left(x\right)d\nu\left(x\right)\\
 & = & \int_{X}\left|\sum\nolimits _{i}\alpha_{i}k_{A_{i}}\left(x\right)\right|^{2}d\nu\left(x\right)=\text{RHS}_{\left(\ref{eq:n3}\right)},
\end{eqnarray*}
the desired conclusion.

Hence $b$ extends by closure to an isometry, also denoted by $b$,
$\mathscr{H}\left(K\right)\longrightarrow L^{2}\left(\nu\right)$,
and we conclude that $b\left(\mathscr{H}\left(K\right)\right)$ is
closed in $L^{2}\left(\nu\right)$.

\emph{Proof of (\ref{enu:n2}).} To begin with we first establish
that $b^{*}$ from (\ref{eq:o1}) yields a well defined operator from
$L^{2}\left(\nu\right)$ into $\mathscr{H}\left(K\right)$. Note that
this makes reference to the respective inner products on the two Hilbert
spaces. In particular for $\mathscr{H}\left(K\right)$, a finite linear
combination (see Definition \ref{def:pdk}), 
\begin{equation}
\left\Vert \sum\nolimits _{i}\alpha_{i}K\left(\cdot,A_{i}\right)\right\Vert _{\mathscr{H}\left(K\right)}^{2}=\sum\nolimits _{i}\sum\nolimits _{j}\alpha_{i}\alpha_{j}K\left(A_{i},A_{j}\right),\label{eq:o3}
\end{equation}
with $\alpha_{i}\in\mathbb{R}$, and $A_{i}\in\mathscr{B}_{fin}$. 

To show that (\ref{eq:o1}) defines an operator as stated into $\mathscr{H}\left(K\right)$,
we must show that $b^{*}\varphi\in\mathscr{H}\left(K\right)$, and
\begin{equation}
\left|\sum\nolimits _{i}\alpha_{i}\left(b^{*}\varphi\right)\left(A_{i}\right)\right|^{2}\leq\left\Vert \varphi\right\Vert _{L^{2}\left(\nu\right)}^{2}\sum\nolimits _{i}\sum\nolimits _{j}\alpha_{i}\alpha_{j}K\left(A_{i},A_{j}\right).\label{eq:o4}
\end{equation}
We have 
\begin{eqnarray*}
\text{LHS}_{\left(\ref{eq:o4}\right)} & \underset{\left(\text{by }\left(\ref{eq:o1}\right)\right)}{=} & \left|\left\langle \varphi,\sum\nolimits _{i}\alpha_{i}k_{A_{i}}\right\rangle _{L^{2}\left(\nu\right)}\right|^{2}\\
 & \underset{\left(\text{by Schwarz}\right)}{\leq} & \left\Vert \varphi\right\Vert _{L^{2}\left(\nu\right)}^{2}\left\Vert \sum\nolimits _{i}\alpha_{i}k_{A_{i}}\right\Vert _{L^{2}\left(\nu\right)}^{2}.
\end{eqnarray*}
Hence $b^{*}\varphi\in\mathscr{H}\left(K\right)$ by Aronszajn \cite{MR0051437},
and (\ref{eq:n3}). 

Indeed, by (\ref{enu:n1}), we also have 
\begin{eqnarray*}
\left\Vert \sum\nolimits _{i}\alpha_{i}k_{A_{i}}\right\Vert _{L^{2}\left(\nu\right)}^{2} & = & \sum\nolimits _{i}\sum\nolimits _{j}\alpha_{i}\alpha_{j}\left\langle k_{A_{i}},k_{A_{j}}\right\rangle _{L^{2}\left(\nu\right)}\\
 & \underset{\left(\text{Thm. \ref{thm:f1}}\right)}{=} & \sum\nolimits _{i}\sum\nolimits _{j}\alpha_{i}\alpha_{j}K\left(A_{i},A_{j}\right),
\end{eqnarray*}
and the result follows; see also (\ref{eq:o3}). 

To show that $b^{*}$ as an operator is the adjoint, we must check
that 
\[
\left\langle b^{*}\varphi,F\right\rangle _{\mathscr{H}\left(K\right)}=\left\langle \varphi,bF\right\rangle _{L^{2}\left(\nu\right)},\quad\forall\varphi\in L^{2}\left(\nu\right),\:\forall F\in\mathscr{H}\left(K\right).
\]
But by \cite{MR0051437}, it is enough to do this for the special
case when $F=K\left(\cdot,A\right)$, $A\in\mathscr{B}_{fin}$; and
in this case, it follows from (\ref{enu:n1}).

\emph{Proof of (\ref{enu:n3}).} Let $\left\{ \varphi_{n}\right\} $
be an ONB in $L^{2}\left(\nu\right)$. By the assumptions, and Theorem
\ref{thm:f1}, we get 
\begin{eqnarray*}
K\left(A,B\right) & = & \left\langle k_{A},k_{B}\right\rangle _{L^{2}\left(\nu\right)}\\
 & \underset{\left(\text{Parseval}\right)}{=} & \sum\nolimits _{n\in\mathbb{N}}\left\langle k_{A},\varphi_{n}\right\rangle _{L^{2}\left(\nu\right)}\left\langle \varphi_{n},k_{B}\right\rangle _{L^{2}\left(\nu\right)}
\end{eqnarray*}
which is the desired conclusion. 
\end{proof}
\begin{rem}
It follows from the argument of the proof that $b\left(\mathscr{H}\left(K\right)\right)=L^{2}\left(\nu\right)$
\emph{iff} $\left\{ k_{A}\right\} _{A\in\mathscr{B}_{fin}}^{\perp}=0$,
where $\perp$ refers to the $L^{2}\left(\nu\right)$-inner product.
See Example \ref{exa:1d} for case when $b\left(\mathscr{H}\left(K\right)\right)$
is one-dimensional.
\end{rem}

\section{\label{sec:MF}Minimal Factorization}

Below we show that any solution $\left\{ k_{A}\right\} _{A\in\mathscr{B}_{fin}}$
to the factorization problem from Theorem \ref{thm:f1} must necessarily
be minimal in a sense we defined below; see also \cite{2017arXiv170609532J}.
\begin{defn}
Let $\left(X,\mathscr{B},\nu\right)$, and $K$ be as in the statement
of Theorem \ref{thm:f1}. Let $\left(\Omega,\mathscr{M},\mu\right)$
be a sigma-finite measure space. Suppose we have two factorizations
\begin{align}
\mathscr{B}_{fin} & \ni A\longmapsto k_{A}\in L^{2}\left(X,\nu\right),\quad\text{and}\label{eq:p1}\\
\mathscr{B}_{fin} & \ni A\longmapsto m_{A}\in L^{2}\left(\Omega,\mu\right),\label{eq:p2}
\end{align}
such that 
\begin{align*}
K\left(A,B\right) & =\int_{X}k_{A}\left(x\right)k_{B}\left(x\right)d\nu\left(x\right)\\
 & =\int_{\Omega}m_{A}\left(\omega\right)m_{B}\left(\omega\right)d\mu\left(\omega\right),\quad\forall A,B\in\mathscr{B}_{fin}.
\end{align*}
We say that $k\ll m$ if (Defn.) there is a measurable mapping $F:\left(\Omega,\mathscr{M}\right)\longrightarrow\left(X,\mathscr{B}\right)$
such that 
\begin{equation}
\mu\circ F^{-1}=\nu.\label{eq:p3}
\end{equation}
\end{defn}

\begin{thm}
Solution $\left\{ k_{A}\right\} _{A\in\mathscr{B}_{fin}}$ to the
factorization problem (\ref{eq:m5}), if they exist, are minimal in
the sense that whenever some other solution $\left\{ k'_{A}\right\} _{A\in\mathscr{B}_{fin}}$
in a sigma-finite measure space $\left(X',\mathscr{B}',\nu'\right)$
satisfies $k'\ll k$, then the two measure spaces $\left(X,\mathscr{B},\nu\right)$
and $\left(X',\mathscr{B}',\nu'\right)$ are isomorphic. 
\end{thm}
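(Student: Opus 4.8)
The plan is to encode the hypothesis $k'\ll k$ as a single operator between the two $L^{2}$-spaces, show that this operator is a \emph{surjective} isometry, and then read off the isomorphism of measure spaces from it. First I would unwind the definition of $k'\ll k$ (reading $k'$ in the role of ``$k$'' and $k$ in the role of ``$m$'' in the definition): it supplies a measurable map $F\colon(X,\mathscr{B})\to(X',\mathscr{B}')$ with $\nu\circ F^{-1}=\nu'$, together with the compatibility $k_{A}=k'_{A}\circ F$ $\nu$-a.e., for every $A\in\mathscr{B}_{fin}$ --- i.e.\ the $k$-factorization is the $F$-pullback of the $k'$-factorization (this is the only reading of the definition under which both families actually factor $K$ through $F$). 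Put $U_{F}\varphi:=\varphi\circ F$. Since $\nu\circ F^{-1}=\nu'$, the change-of-variables formula gives $\|\varphi\circ F\|_{L^{2}(\nu)}=\|\varphi\|_{L^{2}(\nu')}$, so $U_{F}\colon L^{2}(\nu')\to L^{2}(\nu)$ is a well-defined isometry, and by construction $U_{F}k'_{A}=k_{A}$ for every $A$.

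Next I would prove that $U_{F}$ is onto. Its range is a closed subspace of $L^{2}(\nu)$ containing every $k_{A}$, hence it contains the closed linear span of $\{k_{A}:A\in\mathscr{B}_{fin}\}$; by \emph{minimality} of the factorization $\{k_{A}\}$ --- equivalently, that the isometry $b$ of Corollary \ref{cor:f1} is onto, which is the ``$\{k_{A}\}^{\perp}=0$'' condition of the Remark following Corollary \ref{cor:f1} and is satisfied by the solution built in Theorem \ref{thm:f1} --- this closed span is all of $L^{2}(\nu)$. Therefore $U_{F}$ is unitary. It is, moreover, multiplicative on bounded functions and sends $\chi_{B'}$ to $\chi_{F^{-1}(B')}$; being a bijective isometry it thus induces a measure-preserving Boolean isomorphism $[B']\mapsto[F^{-1}(B')]$ between the $\sigma$-finite measure algebras $\mathscr{B}'_{fin}/(\nu'\text{-null})$ and $\mathscr{B}_{fin}/(\nu\text{-null})$: injectivity follows from $\nu(F^{-1}(B'))=\nu'(B')$, and surjectivity from the fact that each $\chi_{B}\in\operatorname{ran}U_{F}$ is $\{0,1\}$-valued and so equals $\chi_{F^{-1}(B')}$ $\nu$-a.e.\ for some $B'$. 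Since this isomorphism of measure algebras is realized by the point map $F$ itself, $F$ is --- modulo null sets --- an isomorphism of the measure spaces $(X,\mathscr{B},\nu)$ and $(X',\mathscr{B}',\nu')$, which is the assertion.

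The hard part will be the surjectivity of $U_{F}$: the construction of $U_{F}$ and its isometry property are formal, but surjectivity is exactly where ``minimality'' must be invoked, i.e.\ one genuinely needs that the given solution $\{k_{A}\}$ spans $L^{2}(\nu)$. Without this the statement is false --- for instance $K\equiv0$ is factored by the zero family over \emph{any} $\sigma$-finite measure space --- so the theorem must be read with ``solution'' meaning a (minimal) solution for which $\{k_{A}\}$ is total, as is the case for the $T^{1/2}$-factorization of Theorem \ref{thm:f1}. A secondary, routine point is the last passage from a measure-algebra isomorphism to a genuine point isomorphism of $\sigma$-finite spaces, which is standard and here is in any case exhibited directly by $F$.
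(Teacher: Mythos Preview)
The paper does not actually prove this theorem; its ``proof'' is a bare citation to \cite{2017arXiv170609532J,MR3642406}. So there is no argument to compare against, and your attempt is the only one on the table.

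Your overall architecture is the right one: realize the hypothesis as a composition operator $U_{F}\colon L^{2}(\nu')\to L^{2}(\nu)$, use the pushforward relation to see it is isometric, use totality of $\{k_{A}\}$ to get surjectivity, and then descend to the measure algebras. Each of these steps is sound \emph{under the hypotheses you have written down}.

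The difficulty is that two of those hypotheses are not in the paper. First, the paper's Definition of $k\ll m$ literally requires only the existence of a measurable $F$ with $\mu\circ F^{-1}=\nu$; it says nothing linking the two families $\{k_{A}\}$ and $\{m_{A}\}$ through $F$. You have inserted the compatibility $k_{A}=k'_{A}\circ F$, and you are right that without it the statement is hopeless --- but you should flag this as an assumption you are importing (presumably from the cited references), not as a reading forced by the text. Second, the theorem as written asserts that \emph{every} solution $\{k_{A}\}$ is minimal, whereas your argument (and, as your $K\equiv 0$ example shows, any correct argument) needs the totality $\overline{\operatorname{span}}\{k_{A}\}=L^{2}(\nu)$. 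You correctly trace this back to the Remark after Corollary~\ref{cor:f1}, but note that that Remark gives a criterion, not a theorem: the factorization built in Theorem~\ref{thm:f1} via $k_{A}=T^{1/2}\chi_{A}$ is total only when $T^{1/2}$ has dense range, i.e.\ when $T$ has trivial kernel, which is an additional (mild) condition on $K$. So what you have is a correct proof of a corrected statement; be explicit that you are repairing both the definition and the hypothesis, and say precisely which version you are proving.

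One small technical point on the last step: surjectivity of $U_{F}$ gives you that the $\sigma$-algebra $F^{-1}(\mathscr{B}')$ coincides with $\mathscr{B}$ modulo $\nu$-null sets, which yields the measure-\emph{algebra} isomorphism you describe. It does not by itself make the point map $F$ invertible; lifting to a point isomorphism needs a standardness assumption on the spaces (or one simply declares ``isomorphic'' to mean isomorphic measure algebras, which is the usual convention in this setting). Your parenthetical acknowledges this, but ``exhibited directly by $F$'' overstates it.
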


\begin{proof}
We refer the reader to our \cite{2017arXiv170609532J,MR3642406}. 
\end{proof}

\section{Two applications}

The positive definite kernels, and their factorizations, considered
in Sections \ref{sec:fact}-\ref{sec:MF}, include as special cases
covariance kernels of general white noise processes, of transient
Markov chains, and of generalized Gaussian fields. Below we discuss
applications of our factorization results to these three cases. Our
treatment of generalized Gaussian fields includes an extension of
the more traditional setting for Ito calculus. This is in the last
section of the paper.

\subsection{The Generalized Wiener-process}

Here we consider the following kernel $K$ on $\mathscr{B}_{fin}\times\mathscr{B}_{fin}$:
Set 
\begin{equation}
K\left(A,B\right)=\nu\left(A\cap B\right),\quad A,B\in\mathscr{B}_{fin},\label{eq:n4}
\end{equation}
where the measure space $\left(X,\mathscr{B},\nu\right)$ is specified
as above. 
\begin{prop}
~
\begin{enumerate}
\item $K=K^{\left(\nu\right)}$ in (\ref{eq:n4}) is positive definite.
\item $K^{\left(\nu\right)}$ is the covariance kernel for the stationary
Wiener process $W=W^{\left(\nu\right)}$ indexed by $\mathscr{B}_{fin}$,
i.e., Gaussian, mean zero, and 
\begin{equation}
\mathbb{E}\left(W_{A}W_{B}\right)=K^{\left(\nu\right)}\left(A,B\right)=\nu\left(A\cap B\right).\label{eq:n5}
\end{equation}
\item If $f\in L^{2}\left(\nu\right)$, and $W_{f}=\int_{X}f\left(x\right)dW_{x}$
denotes the corresponding Ito-integral, then 
\[
\mathbb{E}\left(\left|W_{f}\right|^{2}\right)=\int_{X}\left|f\right|^{2}d\nu;
\]
in particular,if $f=\sum_{i}\alpha_{i}\chi_{A_{i}}$, then 
\[
\sum\nolimits _{i}\sum\nolimits _{j}\alpha_{i}\alpha_{j}K^{\left(\nu\right)}\left(A_{i},A_{j}\right)=\int_{X}\left|\sum\nolimits _{i}\alpha_{i}\chi_{A_{i}}\right|^{2}d\nu.
\]
\item The RKHS $\mathscr{H}\left(K^{\left(\nu\right)}\right)$ of the positive
definite kernel in (\ref{eq:n4}) consists of functions $F$ on $\mathscr{B}_{fin}$
represented by $f\in L^{2}\left(\nu\right)$ via 
\begin{equation}
F\left(A\right)=F_{f}\left(A\right)=\int_{A}fd\nu,\quad A\in\mathscr{B}_{fin};\label{eq:n5b}
\end{equation}
and 
\begin{equation}
\left\Vert F_{f}\right\Vert _{\mathscr{H}\left(K\right)}^{2}=\left\Vert f\right\Vert _{L^{2}\left(\nu\right)}^{2}=\int_{X}\left|f\right|^{2}d\nu.\label{eq:n6}
\end{equation}
\item The isometry $b:\mathscr{H}\left(K\right)\longrightarrow L^{2}\left(\nu\right)$
from Corollary \ref{cor:f1}, is specified by 
\begin{equation}
b\left(K^{\left(\nu\right)}\left(\cdot,A\right)\right)=b\left(\nu\left(\left(\cdot\right)\cap A\right)\right)=\chi_{A},\quad\forall A\in\mathscr{B}_{fin}.\label{eq:7}
\end{equation}
Extension by linearity and by limits. More generally if $F_{f}\in\mathscr{H}\left(K^{\left(\nu\right)}\right)$
is as in (\ref{eq:n5b}), then $b\left(F_{f}\right)=f\in L^{2}\left(\nu\right)$. 
\end{enumerate}
\end{prop}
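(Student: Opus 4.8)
The plan rests on a single observation that reduces most of the statement to bookkeeping: for $A,B\in\mathscr{B}_{fin}$ one has
\[
\nu\left(A\cap B\right)=\int_X\chi_A\left(x\right)\chi_B\left(x\right)d\nu\left(x\right)=\left\langle \chi_A,\chi_B\right\rangle _{L^2\left(\nu\right)},
\]
so $\left\{ k_A:=\chi_A\right\} _{A\in\mathscr{B}_{fin}}\subset L^2\left(\nu\right)$ is an explicit factorization of $K=K^{\left(\nu\right)}$ in the sense of Definition \ref{def:pdf}. This yields (i) at once, since
\[
\sum\nolimits _i\sum\nolimits _j\alpha_i\alpha_j\,\nu\left(A_i\cap A_j\right)=\Bigl\|\sum\nolimits _i\alpha_i\chi_{A_i}\Bigr\|_{L^2\left(\nu\right)}^2\ge0,
\]
and it places us exactly in the setting of Theorem \ref{thm:f1} and Corollary \ref{cor:f1}, which I would invoke directly for (iv)--(v).

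For (ii) and (iii) the plan is to run the standard Gaussian Hilbert space construction over the real Hilbert space $L^2\left(\nu\right)$. Since $K$ is positive definite, Kolmogorov's extension theorem (equivalently, the Segal/isonormal-process construction) furnishes a probability space $\left(\Omega,\mathscr{F},\mathbb{P}\right)$ carrying a centered Gaussian family $\left\{ W\left(h\right)\right\} _{h\in L^2\left(\nu\right)}$, linear in $h$, with $\mathbb{E}\left[W\left(h\right)W\left(g\right)\right]=\left\langle h,g\right\rangle _{L^2\left(\nu\right)}$; set $W_A:=W\left(\chi_A\right)$. Then $\mathbb{E}\left[W_AW_B\right]=\left\langle \chi_A,\chi_B\right\rangle _{L^2\left(\nu\right)}=\nu\left(A\cap B\right)$, which is (ii). I then define the Itô integral by $W_f:=W\left(f\right)$; for simple $f=\sum_i\alpha_i\chi_{A_i}$ this equals $\sum_i\alpha_iW_{A_i}$ by linearity, whence
\[
\mathbb{E}\left(\left|W_f\right|^2\right)=\sum\nolimits _i\sum\nolimits _j\alpha_i\alpha_j\,\nu\left(A_i\cap A_j\right)=\int_X\Bigl|\sum\nolimits _i\alpha_i\chi_{A_i}\Bigr|^2d\nu,
\]
which is the displayed identity of (iii); the general $f\in L^2\left(\nu\right)$ follows by the Itô isometry, i.e. by $L^2\left(\nu\right)$-approximation of $f$ by simple functions and passage to the limit (here one uses $\sigma$-finiteness of $\nu$ to guarantee density of such simple functions).

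For (iv) and (v) the plan is to read off the RKHS from the isometry $b$ of Corollary \ref{cor:f1} applied to the factorization $k_A=\chi_A$. By \eqref{eq:m4} and the opening observation, $\left\langle K\left(\cdot,A\right),K\left(\cdot,B\right)\right\rangle _{\mathscr{H}\left(K\right)}=\nu\left(A\cap B\right)=\left\langle \chi_A,\chi_B\right\rangle _{L^2\left(\nu\right)}$, so $b:K\left(\cdot,A\right)\mapsto\chi_A$ extends by linearity and closure to an isometry $\mathscr{H}\left(K\right)\to L^2\left(\nu\right)$; since $\left\{ \chi_A:A\in\mathscr{B}_{fin}\right\} $ has dense span in $L^2\left(\nu\right)$ (again by $\sigma$-finiteness), $b$ is unitary onto $L^2\left(\nu\right)$, establishing \eqref{eq:7}. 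Given $F\in\mathscr{H}\left(K\right)$, put $f:=b\left(F\right)$; the reproducing property \eqref{eq:m3} together with unitarity of $b$ gives
\[
F\left(A\right)=\left\langle F,K\left(\cdot,A\right)\right\rangle _{\mathscr{H}\left(K\right)}=\left\langle bF,\chi_A\right\rangle _{L^2\left(\nu\right)}=\int_A f\,d\nu,
\]
so $F=F_f$ as in \eqref{eq:n5b}, while $\left\Vert F_f\right\Vert _{\mathscr{H}\left(K\right)}=\left\Vert f\right\Vert _{L^2\left(\nu\right)}$ is just the isometry, giving \eqref{eq:n6} and $b\left(F_f\right)=f$; conversely each $F_f$ with $f\in L^2\left(\nu\right)$ equals $b^{-1}f\in\mathscr{H}\left(K\right)$.

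I do not anticipate a genuine obstacle. The factorization $\chi_A\leftrightarrow K\left(\cdot,A\right)$ is visible by inspection, after which (i), (iv), (v) are immediate from Section \ref{sec:fact}, and (ii), (iii) are the textbook passage from a positive definite kernel to its Gaussian field plus the attendant Itô isometry. The only points needing care --- the closest thing to a hard part --- are the consistent definition and $L^2$-extension of $W_f$ and the surjectivity of $b$, both of which reduce to the density of finitely-supported simple functions in $L^2\left(\nu\right)$, i.e. to the standing $\sigma$-finiteness hypothesis.
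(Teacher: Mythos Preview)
Your proposal is correct and in fact supplies considerably more detail than the paper itself, which does not prove the proposition but simply refers the reader to the literature. Your argument is exactly the natural one: the explicit factorization $k_A=\chi_A$ feeds directly into Corollary~\ref{cor:f1} for parts (i), (iv), (v), while (ii) and (iii) are the standard isonormal/Gaussian-field construction together with the It\^o isometry; the only nontrivial ingredient, density of $\mathscr{D}_{fin}$ in $L^2(\nu)$, is precisely the $\sigma$-finiteness hypothesis already in force.
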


\begin{proof}
The details can be found at various places in the literature; see
e.g., \cite{AJ12,zbMATH06664785,2017arXiv170609532J,2017arXiv170708492J}.
\end{proof}
\begin{example}
\label{exa:1d}Below we illustrate the role of kernels $K$, as in
Definition \ref{def:pdk}, in accounting for correlation. The case
of $K_{\left(\nu\right)}$ in (\ref{eq:n4}) yields a RKHS describing
generalized white noise processes; see also (\ref{eq:n5}). By contrast,
if $\left(X,\mathscr{B},\nu\right)$ is given, and we set 
\begin{equation}
K_{2}\left(A,B\right)=\nu\left(A\right)\nu\left(B\right),\quad\forall A,B\in\mathscr{B}_{fin}\label{eq:q1}
\end{equation}
then the corresponding RKHS $\mathscr{H}\left(K_{2}\right)$ is one-dimensional;
and hence trivial. (We sketch the argument below in the special case
when $\nu$ is finite. The reader can easily extend it.) 

To see this, we identify $\left\{ k_{A}\right\} _{A\in\mathscr{B}_{fin}}$
as an $L^{2}\left(\nu\right)$-solution to (\ref{eq:n1}). Let $\mathbbm{1}$
be the constant function ``one'' on $\left(X,\mathscr{B}\right)$.
Assume $\nu\left(X\right)=1$, and set 
\begin{equation}
k_{A}\left(x\right)=\nu\left(A\right)\mathbbm{1}\left(x\right)=\nu\left(A\right),\;\text{then}\label{eq:q2}
\end{equation}
\begin{equation}
\left\langle k_{A},k_{B}\right\rangle _{L^{2}\left(\nu\right)}=\nu\left(A\right)\nu\left(B\right)=K_{2}\left(A,B\right),\label{eq:q3}
\end{equation}
and so, by Corollary \ref{cor:f1}, $\mathscr{H}\left(K\right)$ is
isometrically identified with a one-dimensional subspace of $L^{2}\left(\nu\right)$,
i.e., the subspace spanned by the constant function $\mathbbm{1}$
in $L^{2}\left(\nu\right)$.
\end{example}

\subsection{\label{subsec:TM}Transient Markov Processes}

Below we identify a class of Markov processes which have naturally
associated reproducing kernel Hilbert spaces in the sense of Section
\ref{sec:fact}. Hence, for each of these Markov processes there is
then a positive definite kernel $K$ as per Definitions \ref{def:pdk}
and \ref{def:pdf}. Our starting point is again a fixed sigma-finite
measure space $\left(X,\mathscr{B},\nu\right)$. The Markov transition
probabilities $P$ will be defined relative to $\left(X,\mathscr{B},\nu\right)$.
The two conditions we place on the Markov process is that it be (i)
reversible relative to $\nu$, and that it be (ii) transient; see
definitions below. The reversible property allows us to bring in spectral
theory in $L^{2}\left(\nu\right)$, and the transient property yields
a well defined Greens function which in turn is the key tool in our
consideration of the associated positive definite kernel $K$; and
we show that our results in Section \ref{sec:fact} will then apply.

Let $\left(X,\mathscr{B},\nu\right)$ be as specified above, and let
$P$ denote a Markov transition measure, i.e., 
\begin{equation}
X\ni x\longrightarrow P\left(x,\cdot\right)\in M_{1}\left(X,\mathscr{B}\right)\label{eq:n8}
\end{equation}
assumed measurable, where $M_{1}\left(X,\mathscr{B}\right)=$ probability
measures on $\left(X,\mathscr{B}\right)$. 

We shall assume that the indexed Markov process is reversible (see,
e.g., \cite{2015arXiv150202549J,MR3450534}), i.e., that for all $A,B\in\mathscr{B}_{fin}$,
we have 
\begin{equation}
\int_{A}P\left(x,B\right)d\nu\left(x\right)=\int_{B}P\left(y,A\right)d\nu\left(y\right).\label{eq:n9}
\end{equation}
Set, for $x\in X$, $A\in\mathscr{B}_{fin}$, $n\in\mathbb{N}$, 
\[
P_{n+1}\left(x,A\right)=\int_{X}P\left(x,dy\right)P_{n}\left(y,A\right),
\]
and 
\begin{equation}
G\left(x,A\right)=\sum_{n=0}^{\infty}P_{n}\left(x,A\right).\label{eq:n10}
\end{equation}
We say that the Markov process is transient iff the sum in (\ref{eq:n10})
is pointwise a.e. (w.r.t $\nu$) convergent, and 
\begin{equation}
\int_{A}G\left(x,B\right)d\nu\left(x\right)\label{eq:n11}
\end{equation}
is finite for all $A,B\in\mathscr{B}_{fin}$. (Also see \cite{MR2357627,MR2384480}.)
\begin{prop}
Let $G$ be as in (\ref{eq:n10}). Setting now 
\begin{equation}
K\left(A,B\right)=\int_{A}G\left(x,B\right)d\nu\left(x\right),\label{eq:n12}
\end{equation}
then $K$ is positive definite on $\mathscr{B}_{fin}\times B_{fin}$. 

Moreover, the condition in Corollary \ref{cor:f1} holds. The $\left\{ k_{A}\right\} _{A\in\mathscr{B}_{fin}}$
system satisfying $K\left(A,B\right)=\int_{X}k_{A}\left(x\right)k_{B}\left(x\right)d\nu\left(x\right)$
is 
\begin{equation}
k_{A}=\left(I-P_{\left(\nu\right)}\right)^{-\frac{1}{2}}\left(\chi_{A}\right)\label{eq:n13}
\end{equation}
where $P_{\left(\nu\right)}$ denotes the selfadjoint operator in
$L^{2}\left(\nu\right)$ given by 
\begin{equation}
\left(P_{\left(\nu\right)}\varphi\right)\left(x\right)=\int_{X}\varphi\left(y\right)P\left(x,dy\right)\label{eq:n14}
\end{equation}
\end{prop}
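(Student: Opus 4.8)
The plan is to verify the three assertions in turn, using Theorem \ref{thm:f1} and Corollary \ref{cor:f1} as the main engines. First, positive definiteness of $K$ in (\ref{eq:n12}): I would observe that reversibility (\ref{eq:n9}) says precisely that $P_{\left(\nu\right)}$ in (\ref{eq:n14}) is symmetric on $\mathscr{D}_{fin}$, hence (being a contraction, since $P\left(x,\cdot\right)$ is a probability measure) extends to a selfadjoint contraction on $L^{2}\left(\nu\right)$ with spectrum in $\left[-1,1\right]$. Then for $\varphi=\sum_{i}\alpha_{i}\chi_{A_{i}}\in\mathscr{D}_{fin}$ one computes
\[
\sum\nolimits_{i}\sum\nolimits_{j}\alpha_{i}\alpha_{j}K\left(A_{i},A_{j}\right)=\sum_{n=0}^{\infty}\left\langle \varphi,P_{\left(\nu\right)}^{n}\varphi\right\rangle _{L^{2}\left(\nu\right)}=\left\langle \varphi,\left(I-P_{\left(\nu\right)}\right)^{-1}\varphi\right\rangle _{L^{2}\left(\nu\right)},
\]
the interchange of sum and integral being justified by transience (\ref{eq:n11}), which guarantees the series converges. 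Since $\left(I-P_{\left(\nu\right)}\right)^{-1}\geq0$ as a selfadjoint operator (spectrum in $\left[\tfrac12,\infty\right)$, in fact), the quadratic form is nonnegative, giving (\ref{eq:m1}).

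Second, that Corollary \ref{cor:f1} applies: by the display above, $K\left(A,B\right)=\left\langle \chi_{A},\left(I-P_{\left(\nu\right)}\right)^{-1}\chi_{B}\right\rangle _{L^{2}\left(\nu\right)}$, and since $\left(I-P_{\left(\nu\right)}\right)^{-1}$ is a positive selfadjoint operator it has a positive selfadjoint square root $\left(I-P_{\left(\nu\right)}\right)^{-1/2}$ with $\mathscr{D}_{fin}\subset\mathrm{dom}\bigl(\left(I-P_{\left(\nu\right)}\right)^{-1/2}\bigr)$ — here one should note $\left(I-P_{\left(\nu\right)}\right)^{-1/2}$ is actually bounded, since $I-P_{\left(\nu\right)}\geq 0$ is bounded below away from $0$ only if $1\notin\sigma\left(P_{\left(\nu\right)}\right)$; in general $\left(I-P_{\left(\nu\right)}\right)^{-1}$ may be unbounded, but transience (\ref{eq:n11}) forces $\chi_{A}$ into its form domain, which is exactly what is needed. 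Then
\[
K\left(A,B\right)=\left\langle \left(I-P_{\left(\nu\right)}\right)^{-1/2}\chi_{A},\left(I-P_{\left(\nu\right)}\right)^{-1/2}\chi_{B}\right\rangle _{L^{2}\left(\nu\right)}=\left\langle k_{A},k_{B}\right\rangle _{L^{2}\left(\nu\right)}
\]
with $k_{A}$ as in (\ref{eq:n13}). This is an $L^{2}\left(\nu\right)$-factorization of the type (\ref{eq:n1}), so Theorem \ref{thm:f1} gives the realization and Corollary \ref{cor:f1} gives the isometry $b$; equivalently, condition (\ref{enu:at1}) of Theorem \ref{thm:f1} holds because $A\mapsto K\left(A,B\right)=\int_{A}G\left(x,B\right)d\nu\left(x\right)$ is manifestly a signed measure absolutely continuous with respect to $\nu$ with Radon-Nikodym derivative $G\left(\cdot,B\right)=\bigl(\left(I-P_{\left(\nu\right)}\right)^{-1}\chi_{B}\bigr)\left(\cdot\right)$.

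Third, I would close the loop by identifying $G\left(\cdot,B\right)$ with $\left(I-P_{\left(\nu\right)}\right)^{-1}\chi_{B}$: pairing (\ref{eq:n10}) against $\chi_{A}$ and using the Chapman-Kolmogorov recursion together with reversibility gives $\int_{A}P_{n}\left(x,B\right)d\nu\left(x\right)=\left\langle \chi_{A},P_{\left(\nu\right)}^{n}\chi_{B}\right\rangle$, and summing over $n$ (again legitimized by transience) yields the claim, which also re-confirms the formula for $k_{A}$ by taking the square root. The main obstacle is the functional-analytic bookkeeping around $\left(I-P_{\left(\nu\right)}\right)^{-1}$ when $1\in\sigma\left(P_{\left(\nu\right)}\right)$: the operator is then unbounded and one must argue carefully that transience places all the relevant indicator functions $\chi_{A}$, $A\in\mathscr{B}_{fin}$, in the form domain of $\left(I-P_{\left(\nu\right)}\right)^{-1}$ — i.e., that $\sum_{n}\left\langle \chi_{A},P_{\left(\nu\right)}^{n}\chi_{A}\right\rangle<\infty$ — so that $k_{A}=\left(I-P_{\left(\nu\right)}\right)^{-1/2}\chi_{A}$ is a genuine element of $L^{2}\left(\nu\right)$; this is exactly the content of finiteness in (\ref{eq:n11}) and is where the transience hypothesis does its essential work.
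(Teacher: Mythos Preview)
Your proposal is correct and follows essentially the same route as the paper: establish that $P_{(\nu)}$ is a selfadjoint contraction on $L^{2}(\nu)$ via reversibility, identify $K(A,B)$ with $\langle\chi_{A},(I-P_{(\nu)})^{-1}\chi_{B}\rangle_{L^{2}(\nu)}$ through the power-series/Green's-function representation, and then take the positive square root to obtain $k_{A}=(I-P_{(\nu)})^{-1/2}\chi_{A}$, with transience supplying the needed finiteness. If anything, your write-up is more explicit than the paper's on the form-domain issue when $1\in\sigma(P_{(\nu)})$ and on the step $\int_{A}P_{n}(x,B)\,d\nu(x)=\langle\chi_{A},P_{(\nu)}^{n}\chi_{B}\rangle$; the paper simply asserts these and invokes Theorem~\ref{thm:f1} and Corollary~\ref{cor:f1}.
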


\begin{proof}
With the stated assumptions, the conclusion follows from a direct
application of the results in Section \ref{sec:fact}, i.e., Theorem
\ref{thm:f1}, and Corollary \ref{cor:f1}.

The justification for the formula (\ref{eq:n13}) is as follows: First
$P_{\left(\nu\right)}$ is a bounded operator in $L^{2}\left(\nu\right)$
as follows. It is selfadjoint on account of (\ref{eq:n9}). To show
that it is contractive, consider $\varphi\in L^{2}\left(\nu\right)$,
and estimate: 
\begin{eqnarray*}
\left|\left\langle \varphi,P_{\left(\nu\right)}\varphi\right\rangle _{L^{2}\left(\nu\right)}\right| & = & \left|\iint\varphi\left(x\right)P\left(x,dy\right)\varphi\left(y\right)d\nu\left(x\right)\right|\\
 & \underset{\left(\text{Jensen}\right)}{\leq} & \int\left|\varphi\left(x\right)\right|\left(\int P\left(x,dy\right)\left|\varphi\left(y\right)\right|^{2}\right)^{\frac{1}{2}}d\nu\left(x\right)\\
 & \underset{\left(\text{Schwarz}\right)}{\leq} & \left(\int\left|\varphi\right|^{2}d\nu\right)^{\frac{1}{2}}\left(\iint P\left(x,dy\right)\left|\varphi\left(y\right)\right|^{2}d\nu\left(x\right)\right)^{\frac{1}{2}}\\
 & \underset{\text{by }\left(\ref{eq:n9}\right)}{=} & \int\left|\varphi\right|^{2}d\nu=\left\Vert \varphi\right\Vert _{L^{2}\left(\nu\right)}^{2};
\end{eqnarray*}
and so for the corresponding operator norms, we have: $\left\Vert P_{\left(\nu\right)}\right\Vert _{L^{2}\rightarrow L^{2}}\leq1$,
which is the asserted contractivity. 

One further shows that the transience assumption implies that $\lambda=1$
is not in the point-spectrum of $P_{\left(\nu\right)}$ as an operator
in $L^{2}\left(\nu\right)$. Hence 
\[
\underset{\left(\text{dense subspace}\right)}{\underbrace{\left(I-P_{\left(\nu\right)}\right)L^{2}\left(\nu\right)}}\subsetneq L^{2}\left(\nu\right).
\]
It follows that $\left(I-P_{\left(\nu\right)}\right)^{-1}$ is unbounded
and selfadjoint in $L^{2}\left(\nu\right)$, and so the Spectral Theorem
applies. Hence $\left(I-P_{\left(\nu\right)}\right)^{-\frac{1}{2}}$
is well defined. 

Finally transience implies that (\ref{eq:n11}) is finite. Moreover,
for $A\in\mathscr{B}_{fin}$, we have 
\begin{eqnarray*}
\int_{A}G\left(x,A\right)d\nu\left(x\right) & \underset{=}{\left(\ref{eq:n11}\right)} & \int_{X}\chi_{A}\left(x\right)\left(1-P_{\left(\nu\right)}\right)^{-1}\left(\chi_{A}\right)\left(x\right)d\nu\left(x\right)\\
 & = & \left\Vert \left(1-P_{\left(\nu\right)}\right)^{-\frac{1}{2}}\left(\chi_{A}\right)\right\Vert _{L^{2}\left(\nu\right)}^{2}
\end{eqnarray*}
which justifies the assertion in (\ref{eq:n13}).
\end{proof}

\section{Gaussian Fields}

In this section we establish a close connection between generalized
Gaussian fields, and associated Ito-integrals, on the one hand, and
the class of positive definite kernels considered here, on the other.
This section is motivated in part by a number of earlier works, for
example \cite{AD93,AJ12,MR3616046,MR3624688,MR0277027,MR3507188,MR3642406,MR2529882,MR3504608,MR3531697,MR2357627,MR2558684,MR1201059,MR2384473,MR2851247}.

The setting below will be as in Section \ref{sec:fact}. Fix a sigma-finite
measure space $\left(X,\mathscr{B},\nu\right)$, and a positive definite
kernel $K$ on $\mathscr{B}_{fin}\times\mathscr{B}_{fin}$. We further
assume that $K$ has an $L^{2}\left(\nu\right)$ factorization; see
(\ref{eq:m5}) and (\ref{eq:n1}). In the proof of Theorem \ref{thm:f1},
we saw that there is an unbounded selfadjoint operator $S$, with
dense domain $\mathscr{D}_{fin}$ in $L^{2}\left(\nu\right)$, such
that we may take $\left\{ k_{A}\right\} _{A\in\mathscr{B}_{fin}}$
to have the form $k_{A}=S\left(\chi_{A}\right)$, where $\chi_{A}$
is the indicator function. Hence, we may take as core domain for the
operator $S$, 
\begin{equation}
\mathscr{D}_{fin}:=span\left\{ \chi_{A}\mathrel{;}A\in\mathscr{B}_{fin}\right\} .\label{eq:g1}
\end{equation}
It is a core for the full domain of $S$. Here, by ``span'', we
mean all \emph{finite} linear combinations. (In interesting examples,
see e.g. Section \ref{subsec:TM}, the operator $S$ will indeed be
unbounded. 

\subsection{The path-space $\Omega$}

Let $\dot{\mathbb{R}}$ denote the one-point compactification of $\mathbb{R}$,
and set 
\begin{equation}
\Omega:=\dot{\mathbb{R}}^{\mathscr{B}_{fin}},\label{eq:g2}
\end{equation}
$\mathscr{C}:=$ the cylinder sigma-algebra of subsets of $\Omega$;
and $\mathbb{P}$ be the Gaussian probability measure on $\Omega$,
defined on $\mathscr{C}$, and indexed by $K$. 

In details, if $F=\left\{ A_{i}\right\} _{1}^{n}$ is a finite system,
$A_{i}\in\mathscr{B}_{fin}$, set 
\begin{equation}
\mathscr{C}_{F}:=\prod_{1}^{n}A_{i}\times\prod_{F^{c}}\dot{\mathbb{R}},\label{eq:g3}
\end{equation}
(a cylinder subset); and let $\mathscr{C}$ be the sigma-algebra of
all subsets of $\Omega$ which is generated by the cylinder sets. 

To construct $\mathbb{P}$ as a probability measure, and defined on
$\mathscr{C}$, we first specify its finite-dimensional joint distributions,
\begin{alignat}{2}
\mathbb{P}\left(\cdot\mid\mathscr{C}_{F}\right) & := &  & \text{ the Gaussian on \ensuremath{\mathbb{R}^{n}} which has \ensuremath{0} mean, and }\nonumber \\
 &  &  & \text{ covariance matrix \ensuremath{\left(K\left(A_{i},A_{j}\right)\right)_{i,j=1}^{n}.}}\label{eq:g4}
\end{alignat}
By Kolmogorov's consistency property, we then get a unique probability
measure $\mathbb{P}$ on $\left(\Omega,\mathscr{B}\right)$ which
satisfies (\ref{eq:g4}).

For $A\in\mathscr{B}_{fin}$, and $\omega\in\Omega$ (see (\ref{eq:g2})),
set 
\begin{equation}
W_{A}\left(\omega\right):=\omega\left(A\right).\label{eq:g5}
\end{equation}
Then, by construction, the expectation $\mathbb{E}$, defined from
$\mathbb{P}$, satisfies $\mathbb{E}\left(W_{A}\right)=0$, 
\begin{equation}
\mathbb{E}\left(W_{A}W_{B}\right)=K\left(A,B\right).\label{eq:g6}
\end{equation}
Specifically, each $W_{A}$ is Gaussian with distribution $N\left(0,K\left(A,A\right)\right)$;
and the joint distribution of $\left(W_{A_{1}},W_{A_{2}},\cdots,W_{A_{n}}\right)$
is the Gaussian from (\ref{eq:g4}).

Now, let 
\begin{alignat}{2}
\mathscr{A} & : & = & \;\left\{ A_{i}\right\} =\text{a countable partition of \ensuremath{X}, \ensuremath{A_{i}\in\mathscr{B}_{fin}}}\label{eq:g7}\\
 & \text{} &  & \ensuremath{\;A_{i}\cap A_{j}=\emptyset,}\ensuremath{i\neq j},\nonumber 
\end{alignat}
and let $\mathscr{C}_{\mathscr{A}}:=$ the sigma-subalgebra of $\mathscr{C}$
which is generated by $\left\{ W_{A_{i}}\right\} $, $A_{i}\in\mathscr{A}$.
Introducing conditional expectations, we get an inductive system of
Ito-integrals, indexed by the set of partitions $\mathscr{A}$ (as
above) where we use the usual ordering of partitions $\mathscr{A}\leq\mathscr{A}'$
given by refinement: If $\varphi=\sum_{i}\alpha_{i}\chi_{A_{i}}$,
\begin{equation}
\mathbb{E}\left(\left|\int\varphi dW_{x}\right|^{2}\mid\mathscr{C}_{\mathscr{A}}\right)=\sum\nolimits _{i}\sum\nolimits _{j}\alpha_{i}\alpha_{j}K\left(A_{i},A_{j}\right)\label{eq:g8}
\end{equation}
where the ``$\mid$'' stands for conditional expectation. 

Passing to the limit, over the set of all partitions, we get a necessary
and sufficient condition for when the Ito-integral $\int_{X}\varphi\left(x\right)dW_{x}$
is well defined, and is in $L^{2}\left(\Omega,\mathscr{C},\mathbb{P}\right)$,
i.e., when $\mathbb{E}\left(\left|\int\varphi dW_{x}\right|^{2}\right)<\infty$. 

\subsection{$L^{2}\left(\nu\right)$-factorizations}

We now introduce a particular $L^{2}\left(\nu\right)$ factorization,
\begin{equation}
K\left(A,B\right)=\left\langle k_{A},k_{B}\right\rangle _{L^{2}\left(\nu\right)},\quad A,B\in\mathscr{B}_{fin}\label{eq:g9}
\end{equation}
as in Section \ref{sec:fact}. We introduce the selfadjoint operator
$S$ from (\ref{eq:m13}), so 
\begin{equation}
k_{A}=S\left(\chi_{A}\right).\label{eq:g10}
\end{equation}
For applications, it is not a serious restriction to further assume
that $S$ has a bounded inverse (in $L^{2}\left(\nu\right)$); see
e.g., Section \ref{subsec:TM} above.
\begin{thm}
\label{thm:g1}Let $\left(X,\mathscr{B},\nu\right)$, $\left(\Omega,\mathscr{C},\mathbb{P}\right)$,
and $\left\{ W_{A}\right\} _{A\in\mathscr{B}_{fin}}$ be as above.
We shall consider the corresponding Ito-integral $\int_{X}\varphi\left(x\right)dW_{x}$
for deterministic functions $\varphi$ and $\psi$ on $\left(X,\mathscr{B}\right)$.
We have:
\begin{enumerate}
\item \label{enu:ga1}$\int\varphi dW\in L^{2}\left(\Omega,\mathscr{C},\mathbb{P}\right)$
iff $S\left(\varphi\right)\in L^{2}\left(\nu\right)$, i.e., iff $\varphi$
is in the domain of $S$. 
\item When (\ref{enu:ga1}) holds, then 
\begin{equation}
\mathbb{E}\left(\left|\int\varphi dW\right|^{2}\right)=\left\Vert S\left(\varphi\right)\right\Vert _{L^{2}\left(\nu\right)}^{2},\label{eq:g11}
\end{equation}
and 
\begin{equation}
\mathbb{E}\left(\left(\int\varphi dW\right)\left(\int\psi dW\right)\right)=\left\langle S\varphi,S\psi\right\rangle _{L^{2}\left(\nu\right)}.\label{eq:g12}
\end{equation}
\end{enumerate}
\end{thm}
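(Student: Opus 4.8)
The strategy is to reduce the statement about the Ito-integral $\int\varphi\,dW$ to the finite-dimensional computation already recorded in (\ref{eq:g8}), and then to identify the right-hand limit with $\|S\varphi\|_{L^2(\nu)}^2$ via the factorization (\ref{eq:g9})--(\ref{eq:g10}). First I would treat simple functions: for $\varphi=\sum_i\alpha_i\chi_{A_i}$ with the $A_i$ disjoint and in $\mathscr{B}_{fin}$, the integral $\int\varphi\,dW=\sum_i\alpha_i W_{A_i}$ is a finite linear combination of the Gaussian variables $W_{A_i}$, hence automatically in $L^2(\Omega,\mathscr{C},\mathbb{P})$, and by (\ref{eq:g6}) together with bilinearity of the covariance,
\begin{equation*}
\mathbb{E}\Bigl(\bigl|\textstyle\int\varphi\,dW\bigr|^2\Bigr)=\sum_i\sum_j\alpha_i\alpha_j K(A_i,A_j)=\sum_i\sum_j\alpha_i\alpha_j\langle k_{A_i},k_{A_j}\rangle_{L^2(\nu)}=\Bigl\|\textstyle\sum_i\alpha_i k_{A_i}\Bigr\|_{L^2(\nu)}^2=\|S\varphi\|_{L^2(\nu)}^2,
\end{equation*}
using $k_{A}=S(\chi_A)$ and linearity of $S$ on $\mathscr{D}_{fin}$. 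This already proves (\ref{eq:g11}) on the core $\mathscr{D}_{fin}$, and the polarization identity then gives (\ref{eq:g12}) there.

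Next I would pass to the limit. The map $\varphi\mapsto\int\varphi\,dW$ is, by the previous step, an isometry from $(\mathscr{D}_{fin},\|S(\cdot)\|_{L^2(\nu)})$ into $L^2(\Omega,\mathscr{C},\mathbb{P})$; equivalently, viewing $\mathscr{D}_{fin}$ with the graph norm of $S$, the composition $\varphi\mapsto S\varphi\mapsto\int\varphi\,dW$ extends by density. The inductive system of conditional expectations over partitions $\mathscr{A}$ described before (\ref{eq:g8}) makes this precise: for a general measurable $\varphi$, the integral $\int\varphi\,dW$ is defined as the $L^2(\mathbb{P})$-limit of the martingale $\mathbb{E}(\,\cdot\mid\mathscr{C}_{\mathscr{A}})$-approximants built from partition-adapted simple functions $\varphi_{\mathscr{A}}$, and (\ref{eq:g8}) identifies each conditional second moment with $\sum_{i,j}\alpha_i^{(\mathscr{A})}\alpha_j^{(\mathscr{A})}K(A_i,A_j)=\|S\varphi_{\mathscr{A}}\|_{L^2(\nu)}^2$. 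By martingale convergence (or monotone convergence along the net of partitions), this net has a finite limit precisely when $\sup_{\mathscr{A}}\|S\varphi_{\mathscr{A}}\|_{L^2(\nu)}^2<\infty$, i.e.\ precisely when $S\varphi\in L^2(\nu)$, which is the statement in (\ref{enu:ga1}); and when the limit is finite it equals $\|S\varphi\|_{L^2(\nu)}^2$, giving (\ref{eq:g11}) in general. Polarizing once more yields (\ref{eq:g12}).

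The main obstacle is the bookkeeping in the limiting step: one must check that the conditional expectations $\varphi_{\mathscr{A}}:=\sum_i \bigl(\nu(A_i)^{-1}\!\int_{A_i}\varphi\,d\nu\bigr)\chi_{A_i}$ (the natural partition approximants) converge to $\varphi$ in the graph norm of $S$ exactly under the stated hypothesis, using that $\mathscr{D}_{fin}$ is a core for $S$; this is where the assumption that $S$ has a bounded inverse, mentioned just before the theorem, is convenient, since it turns the graph norm into one comparable with $\|S(\cdot)\|_{L^2(\nu)}$ and makes the approximation routine. One should also note the harmless subtlety that $W_A$ takes values in the one-point compactification $\dot{\mathbb{R}}$ only to make $\Omega$ compact; since each $W_A$ is a.s.\ finite Gaussian under $\mathbb{P}$, all $L^2(\mathbb{P})$ manipulations proceed as in the real-valued case. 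Apart from these points the argument is a direct application of the isometry built in the proof of Theorem \ref{thm:f1} together with Kolmogorov consistency.
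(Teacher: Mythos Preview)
Your proposal is correct and follows essentially the same route as the paper: establish the isometry $\mathbb{E}\bigl(|\int\varphi\,dW|^{2}\bigr)=\|S\varphi\|_{L^{2}(\nu)}^{2}$ first for simple functions in $\mathscr{D}_{fin}$ via (\ref{eq:g6}) and $k_{A}=S(\chi_{A})$, and then pass to the limit over the inductive system of partitions. The only cosmetic difference is that the paper verifies the bilinear identity (\ref{eq:g12}) directly on simple functions and derives (\ref{eq:g11}) as the diagonal case, whereas you do (\ref{eq:g11}) first and polarize; and your discussion of the limiting step (partition approximants, graph norm of $S$, role of the bounded inverse) is in fact more detailed than the paper's, which simply invokes ``standard considerations for generalized Ito limits.''
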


\begin{proof}
In view of the discussion before the statement of the theorem, it
is enough to prove (\ref{eq:g12}). We shall do this by first establishing
the formula when a partition is fixed. The general case will then
follow from the inductive limit argument outlined above. 

In detail: Since the Ito-integral is defined as an inductive limit
on partitions as specified in (\ref{eq:g8}), to prove (\ref{eq:g12}),
it is enough to fix a partition $\mathscr{A}=\left\{ A_{i}\right\} $,
and functions $\varphi=\sum_{i}\alpha_{i}\chi_{A_{i}}$, $\psi=\sum_{j}\beta_{j}\chi_{A_{j}}$;
then
\begin{eqnarray*}
\mathbb{E}\left(\left(\int\varphi dW\right)\left(\int\psi dW\right)\right) & = & \sum\nolimits _{i}\sum\nolimits _{j}\alpha_{i}\beta_{j}\mathbb{E}\left(W_{A_{i}}W_{A_{j}}\right)\\
 & \underset{\left(\text{by \ensuremath{\left(\ref{eq:g6}\right)}}\right)}{=} & \sum\nolimits _{i}\sum\nolimits _{j}\alpha_{i}\beta_{j}K\left(A_{i},A_{j}\right)\\
 & \underset{\left(\text{by \ensuremath{\left(\ref{eq:g10}\right)}}\right)}{=} & \sum\nolimits _{i}\sum\nolimits _{j}\alpha_{i}\beta_{j}\left\langle S\left(\chi_{A_{i}}\right),S\left(\chi_{A_{j}}\right)\right\rangle _{L^{2}\left(\nu\right)}\\
 & = & \left\langle S\left(\sum\nolimits _{i}\alpha_{i}\chi_{A_{i}}\right),S\left(\sum\nolimits _{j}\beta_{j}\chi_{A_{j}}\right)\right\rangle _{L^{2}\left(\nu\right)}\\
 & = & \left\langle S\varphi,S\psi\right\rangle _{L^{2}\left(\nu\right)}
\end{eqnarray*}
which is the desired conclusion (\ref{eq:g12}) when the partition
$\mathscr{A}$ is fixed. 

The general result follows from use of the ordering of partitions,
and the corresponding Kolmogorov inductive limit over all partitions;
see (\ref{eq:g7}). For more details, we refer to the cited literature
on standard considerations for generalized Ito limits.
\end{proof}
Let the setting be as in the statement of Theorem \ref{thm:g1}. We
consider the limit over partitions $\mathscr{A}$ as in (\ref{eq:g7}):
\begin{equation}
\mathbb{E}\left(\left|\int\varphi dW\right|^{2}\mid\mathscr{C}_{\mathscr{A}}\right),\label{eq:h1}
\end{equation}
where the limit is taken over $\mathscr{A}$. 

Here $\mathscr{C}_{\mathscr{A}}:=$ (the sigma-algebra generated by
$\left\{ W_{A_{i}}\right\} $ for $A_{i}\in\mathscr{A}$); and we
take monotone limit over refinements, written $\mathscr{A}\leq\mathscr{A}'$.
Note that 
\begin{equation}
\mathbb{E}\left(\left|\int\varphi dW\right|^{2}\mid\mathscr{C}_{\mathscr{A}}\right)\leq\mathbb{E}\left(\left|\int\varphi dW\right|^{2}\mid\mathscr{C}_{\mathscr{A}'}\right)\label{eq:h2}
\end{equation}
holds if $\mathscr{A}\leq\mathscr{A}'$. 
\begin{cor}[Monotonicity]
 We have: 
\begin{equation}
\sup_{\mathscr{A}}\mathbb{E}\left(\left|\int\varphi dW\right|^{2}\mid\mathscr{C}_{\mathscr{A}}\right)=\mathbb{E}\left(\left|\int\varphi dW\right|^{2}\right)\label{eq:h3}
\end{equation}
where the supreme in (\ref{eq:h3}) over all partitions is finite
iff $\varphi\in domain\left(S\right)$. 
\end{cor}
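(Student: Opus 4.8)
The plan is to trade the conditional expectations for orthogonal projections inside the Gaussian Hilbert space $\mathscr{G}:=\overline{span}\left\{ W_{A}\mathrel{;}A\in\mathscr{B}_{fin}\right\} \subseteq L^{2}\left(\Omega,\mathscr{C},\mathbb{P}\right)$. For a partition $\mathscr{A}=\left\{ A_{i}\right\} $ as in (\ref{eq:g7}) put $\mathscr{G}_{\mathscr{A}}:=\overline{span}\left\{ W_{A_{i}}\mathrel{;}A_{i}\in\mathscr{A}\right\} $. Since the variables in sight are centered and jointly Gaussian, $\mathbb{E}\left(\,\cdot\mid\mathscr{C}_{\mathscr{A}}\right)$ acts on $\mathscr{G}$ as the orthogonal projection $P_{\mathscr{G}_{\mathscr{A}}}$ of $L^{2}\left(\Omega,\mathbb{P}\right)$; and --- as (\ref{eq:g8}) already records for adapted step functions --- the \emph{number} $\mathbb{E}\left(\left|\int\varphi\,dW\right|^{2}\mid\mathscr{C}_{\mathscr{A}}\right)$ occurring in (\ref{eq:g8})--(\ref{eq:h3}) is the second moment of that conditional expectation, i.e. $\bigl\Vert P_{\mathscr{G}_{\mathscr{A}}}\bigl(\int\varphi\,dW\bigr)\bigr\Vert _{L^{2}\left(\Omega,\mathbb{P}\right)}^{2}$ (when $\int\varphi\,dW$ is not an honest $L^{2}\left(\Omega\right)$-variable, this is to be read as the second moment of the level-$\mathscr{A}$ truncation $\int_{\mathscr{A}}\varphi\,dW\in\mathscr{G}_{\mathscr{A}}$ supplied by the generalized-Ito construction). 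Because $\mathscr{A}\leq\mathscr{A}'\Rightarrow\mathscr{G}_{\mathscr{A}}\subseteq\mathscr{G}_{\mathscr{A}'}$, the Pythagorean identity for nested projections gives the monotonicity (\ref{eq:h2}); hence $\sup_{\mathscr{A}}$ of this net equals its directed limit over refinements.

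The next ingredient is a density statement, and this is the one place where $\sigma$-finiteness of $\nu$ is used. Given any finite family $A_{1},\dots,A_{n}\in\mathscr{B}_{fin}$, the atoms of the finite Boolean algebra they generate all lie in $\mathscr{B}_{fin}$, except possibly for the complementary atom $\bigl(\bigcup_{j}A_{j}\bigr)^{c}$, which by $\sigma$-finiteness splits into countably many sets of finite $\nu$-measure; adjoining those pieces produces a countable partition $\mathscr{A}$ of the form (\ref{eq:g7}) with every $\chi_{A_{j}}$, hence every $W_{A_{j}}$, in $\mathscr{G}_{\mathscr{A}}$. Thus each $W_{A}$ ($A\in\mathscr{B}_{fin}$) lies in some $\mathscr{G}_{\mathscr{A}}$; the net $\{\mathscr{G}_{\mathscr{A}}\}$ is directed under refinement (a common refinement of two countable finite-measure partitions is again one); and therefore $\overline{\bigcup_{\mathscr{A}}\mathscr{G}_{\mathscr{A}}}=\mathscr{G}$, so $P_{\mathscr{G}_{\mathscr{A}}}\longrightarrow I_{\mathscr{G}}$ strongly along the net.

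Now split into cases, using the operator $S$ of (\ref{eq:m13}) with $k_{A}=S\chi_{A}$. If $\varphi\in dom\left(S\right)$, then by Theorem~\ref{thm:g1}\,(\ref{enu:ga1}) the Ito integral $\int\varphi\,dW$ is a genuine element of $L^{2}\left(\Omega,\mathbb{P}\right)$ --- indeed of $\mathscr{G}$, being the $L^{2}\left(\Omega\right)$-limit of $\int\varphi_{n}\,dW$ for $\varphi_{n}\in\mathscr{D}_{fin}$ with $S\varphi_{n}\to S\varphi$ (available because $\mathscr{D}_{fin}$ is a core for $S$), each such integral lying in $span\{W_{A}\}$. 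Strong convergence $P_{\mathscr{G}_{\mathscr{A}}}\to I_{\mathscr{G}}$ then yields $\bigl\Vert P_{\mathscr{G}_{\mathscr{A}}}\bigl(\int\varphi\,dW\bigr)\bigr\Vert ^{2}\uparrow\bigl\Vert\int\varphi\,dW\bigr\Vert _{L^{2}\left(\Omega,\mathbb{P}\right)}^{2}=\mathbb{E}\bigl(\bigl|\int\varphi\,dW\bigr|^{2}\bigr)$, which by Theorem~\ref{thm:g1} equals $\left\Vert S\varphi\right\Vert _{L^{2}\left(\nu\right)}^{2}<\infty$; since the net is monotone, the supremum equals this value, giving (\ref{eq:h3}). (The inequality $\leq$ is in any case immediate from $\left\Vert P_{\mathscr{G}_{\mathscr{A}}}\right\Vert\leq1$.) If instead $\varphi\notin dom\left(S\right)$, assume for contradiction that the monotone net of second moments is bounded by some $L<\infty$. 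In the standard generalized-Ito construction the level-$\mathscr{A}$ truncations $\int_{\mathscr{A}}\varphi\,dW$ form an $L^{2}\left(\Omega\right)$-martingale along refinements, so by the Pythagorean identity a uniform second-moment bound makes them Cauchy; their $L^{2}\left(\Omega,\mathbb{P}\right)$-limit is then a realization of $\int\varphi\,dW$ in $L^{2}\left(\Omega,\mathbb{P}\right)$, which by Theorem~\ref{thm:g1}\,(\ref{enu:ga1}) forces $\varphi\in dom\left(S\right)$ --- a contradiction. (Equivalently, writing each truncation as $\int\varphi_{\mathscr{A}}\,dW$ with $\varphi_{\mathscr{A}}\in\mathscr{D}_{fin}$, a bound on $\left\Vert S\varphi_{\mathscr{A}}\right\Vert _{L^{2}\left(\nu\right)}$ together with $\varphi_{\mathscr{A}}\to\varphi$ in $L^{2}\left(\nu\right)$ and closedness of the selfadjoint $S$ again gives $\varphi\in dom\left(S\right)$.) Hence the supremum is $+\infty$ in this case.

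I expect the real obstacle to be the bookkeeping in the divergent case: pinning down what $\mathbb{E}\bigl(\bigl|\int\varphi\,dW\bigr|^{2}\mid\mathscr{C}_{\mathscr{A}}\bigr)$ \emph{means} when $\int\varphi\,dW$ is not in $L^{2}\left(\Omega,\mathbb{P}\right)$ --- it should be the second moment of a suitable level-$\mathscr{A}$ truncation $\int_{\mathscr{A}}\varphi\,dW\in\mathscr{G}_{\mathscr{A}}$ --- and verifying that these truncations genuinely form a martingale over refinements (equivalently that their second moments coincide with $\left\Vert S\varphi_{\mathscr{A}}\right\Vert _{L^{2}\left(\nu\right)}^{2}$ for an approximating family $\varphi_{\mathscr{A}}\to\varphi$), which is exactly the ``standard considerations for generalized Ito limits'' the paper defers to; under the mild running hypothesis (noted before Theorem~\ref{thm:g1}) that $S$ has a bounded inverse this is cleanest, since then $\left\Vert S\,\cdot\,\right\Vert _{L^{2}\left(\nu\right)}$ is equivalent to the graph norm on $dom\left(S\right)$ and $\mathscr{D}_{fin}$ is a core for it. The remaining steps --- reduction to projections, monotonicity, the $\sigma$-finiteness density, and the appeal to Theorem~\ref{thm:g1} --- are routine.
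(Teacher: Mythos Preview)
Your argument follows the same route as the paper's: identify the conditional expectations $\mathbb{E}\left(\cdot\mid\mathscr{C}_{\mathscr{A}}\right)$ with an increasing net of projections and read off monotonicity from the tower property. The paper's proof stops there, asserting that (\ref{eq:h2})--(\ref{eq:h3}) ``follow immediately,'' whereas you supply the two pieces the paper leaves implicit: the density step (via $\sigma$-finiteness) showing $P_{\mathscr{G}_{\mathscr{A}}}\to I_{\mathscr{G}}$ strongly, so that the monotone net actually climbs all the way to $\mathbb{E}\bigl(\bigl|\int\varphi\,dW\bigr|^{2}\bigr)$; and the ``only if'' direction of the domain characterization, which you obtain from closedness of $S$ (equivalently, $L^{2}$-bounded martingale convergence of the truncations). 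Your closing caveat about the meaning of $\mathbb{E}\bigl(\bigl|\int\varphi\,dW\bigr|^{2}\mid\mathscr{C}_{\mathscr{A}}\bigr)$ when the integral is not in $L^{2}(\Omega,\mathbb{P})$ is apt---the paper's notation in (\ref{eq:g8})--(\ref{eq:h1}) treats this quantity as a \emph{number}, which only makes sense under the level-$\mathscr{A}$ truncation reading you adopt; the paper defers this to ``standard considerations for generalized Ito limits'' just as you do.
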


\begin{proof}
From the construction, we get 
\[
\mathbb{E}\left(\cdot\cdot\mid\mathscr{C}_{\mathscr{A}}\right)\leq\mathbb{E}\left(\cdot\cdot\mid\mathscr{C}_{\mathscr{A}'}\right)
\]
in the ordering of projections in $L^{2}\left(\Omega,\mathscr{C},\mathbb{P}\right)$.
Equivalently, when $\mathscr{A}\leq\mathscr{A}'$, we have:
\[
\mathbb{E}\left(\left(F\mid\mathscr{C}_{\mathscr{A}'}\right)\mid\mathscr{C}_{\mathscr{A}}\right)=\mathbb{E}\left(F\mid\mathscr{C}_{\mathscr{A}}\right),
\]
and the assertions in (\ref{eq:h2})-(\ref{eq:h3}) follow immediately
from this.
\end{proof}
\begin{acknowledgement*}
The co-authors thank colleagues for helpful and enlightening discussions,
and members in the Math Physics seminar at The University of Iowa.
\end{acknowledgement*}
\bibliographystyle{amsalpha}
\bibliography{ref}

\end{document}